\title{Hamiltonian degree sequences in digraphs}
\author{Daniela K\"uhn, Deryk Osthus and Andrew Treglown}
\thanks {D.~K\"uhn and A.~Treglown were supported by the EPSRC, grant no.~EP/F008406/1.
D.~Osthus was supported by the EPSRC, grant no.~EP/E02162X/1 and~EP/F008406/1.}
\date{} 
\def\COMMENT#1{}
\def\TASK#1{}
\begin{document}
\maketitle
\def\noproof{{\unskip\nobreak\hfill\penalty50\hskip2em\hbox{}\nobreak\hfill%
        $\square$\parfillskip=0pt\finalhyphendemerits=0\par}\goodbreak}
\def\endproof{\noproof\bigskip}
\newdimen\margin   % needed for macros \textdisplay & \ltextdisplay
\def\textno#1&#2\par{%
    \margin=\hsize
    \advance\margin by -4\parindent
           \setbox1=\hbox{\sl#1}%
    \ifdim\wd1 < \margin
       $$\box1\eqno#2$$%
    \else
       \bigbreak
       \hbox to \hsize{\indent$\vcenter{\advance\hsize by -3\parindent
       \sl\noindent#1}\hfil#2$}%
       \bigbreak
    \fi}
\def\proof{\removelastskip\penalty55\medskip\noindent{\bf Proof. }}
\def\C{\mathcal{C}}
\def\eps{\varepsilon}
\def\ex{\mathbb{E}}
\def\prob{\mathbb{P}}
\def\eul{{\rm e}}
\newtheorem{firstthm}{Proposition}
\newtheorem{thm}[firstthm]{Theorem}
\newtheorem{prop}[firstthm]{Proposition}
\newtheorem{lemma}[firstthm]{Lemma}
\newtheorem{cor}[firstthm]{Corollary}
\newtheorem{problem}[firstthm]{Problem}
\newtheorem{defin}[firstthm]{Definition}
\newtheorem{conj}[firstthm]{Conjecture}
\newtheorem{claim}[firstthm]{Claim}
\begin{abstract}
We show that for each $\eta>0$ every digraph~$G$ of sufficiently large order~$n$
is Hamiltonian if its out- and indegree sequences $d^+ _1\le \dots \le d^+ _n$
and $d^- _1 \le \dots \le d^- _n$ satisfy (i)
$ d^+ _i \geq i+ \eta n  $ or $ d^- _{n-i- \eta n} \geq n-i $ and
(ii) $ d^- _i \geq i+ \eta n $ or $ d^+ _{n-i- \eta n} \geq n-i $ for all $i < n/2$.
This gives an approximate solution to a problem of Nash-Williams~\cite{nw}
concerning a digraph analogue of Chv\'atal's theorem.
In fact, we prove the stronger result that such digraphs~$G$ are pancyclic.
\end{abstract}
\section{Introduction}\label{sec1}
Since it is unlikely that there is a characterization of all those graphs which
contain a Hamilton cycle it is natural to ask for sufficient conditions
which ensure Hamiltonicity. One of the most general of these is Chv\'atal's
theorem~\cite{ch} that characterizes all those degree sequences which
ensure the existence of a Hamilton cycle in a graph: Suppose that the degrees of the graph
are $d_1\le \dots \le d_n$. If $n \geq 3$ and $d_i \geq i+1$ or $d_{n-i} \geq n-i$
for all $i <n/2$ then $G$ is Hamiltonian. This condition on the degree sequence is
best possible in the sense that for any degree sequence violating this condition there
is a corresponding graph with no Hamilton cycle. More precisely, if $d_1 \leq \dots \leq d_n$
is a graphical degree sequence (i.e. there exists a graph with this degree sequence) then there
exists a non-Hamiltonian graph $G$ whose degree sequence $d'_1 \leq  \dots \leq d'_n$ is such
that $d'_i \geq d_i$ for all $1\leq i \leq n$.

A special case of Chv\'atal's theorem is Dirac's theorem, which states that every graph
with $n \geq 3$ vertices and minimum degree at least $n/2$ has a Hamilton cycle. An analogue
of Dirac's theorem for digraphs was proved by Ghouila-Houri~\cite{gh}. (The digraphs
we consider do not have loops and we allow at most one edge in each direction between
any pair of vertices.) Nash-Williams~\cite{nw}
raised the question of a digraph analogue of Chv\'atal's theorem quite soon after the
latter was proved. 

For a digraph~$G$ it is natural to consider both its outdegree sequence $d^+ _1,\dots , d^+ _n$
and its indegree sequence $d^- _1,\dots , d^- _n$. Throughout this paper we take the convention that
$d^+ _1\le \dots \le d^+ _n$ and $d^- _1 \le \dots \le  d^- _n$ without mentioning this explicitly.
Note that the terms $d^+ _i$ and $d^- _i$ do not necessarily correspond to the degree of the same vertex
of~$G$. 
\begin{conj}[Nash-Williams~\cite{nw}]\label{nw}
Suppose that $G$ is a strongly connected digraph on $n \geq 3$ vertices
such that for all $i < n/2$
\begin{itemize}
\item[(i)]  $d^+ _i \geq i+1 $ or $ d^- _{n-i} \geq n-i $,
\item[(ii)] $ d^- _i \geq i+1$ or $ d^+ _{n-i} \geq n-i.$
\end{itemize}
Then $G$ contains a Hamilton cycle.
\end{conj}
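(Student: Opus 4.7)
The plan is to combine the paper's approximate main theorem with a stability/extremal analysis. In Phase~1, for a small fixed $\eta > 0$, observe that if the full Nash-Williams condition holds and the approximate version (with $i + \eta n$ in place of $i+1$, and the corresponding shift on the indegree side) also holds, then the paper's main theorem already yields a Hamilton cycle. So the remaining task is to deal with digraphs where the exact condition is satisfied but the approximate one fails at some index $i < n/2$, i.e.\ digraphs which are in some sense near-extremal.

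Phase~2 carries out a structural analysis of these residual digraphs. If $d^+_i < i + \eta n$ at some critical index~$i$ (so the first clause of~(i) is nearly tight) while the exact Nash-Williams condition still forces $d^-_{n-i} \geq n-i$, then there is a set $A$ of roughly $i$ vertices whose outdegrees are barely above $i$, and outside $A$ the indegree condition must take over in a rigid way. Running this argument through every tight index simultaneously, and combining with the symmetric condition~(ii), should pin $G$ down to a bounded family of near-extremal configurations. These would resemble the canonical tight examples for Chv\'atal's theorem: two groups of vertices (one of controlled size) joined by a near-complete semicomplete bipartite structure, together with a small number of extra edges guaranteeing strong connectivity. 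Small values of $n$ (below the threshold of the approximate theorem) would be handled by a finite case analysis, possibly with computer assistance.

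In Phase~3 one verifies Hamiltonicity directly for each of these extremal configurations, typically by taking a near-perfect matching in the dense bipartite part and patching the resulting near-Hamilton path into a cycle using strong-connectivity edges, together with a standard rotation argument. The main obstacle is Phase~2: the asymmetry between the in- and outdegree conditions and the fact that~(i) and~(ii) can be simultaneously near-tight at several indices make the enumeration of extremal configurations subtle, and the strong-connectivity hypothesis (which plays no role in the approximate theorem) must be leveraged to rule out the standard blow-up obstructions. Moreover, unlike in the undirected Chv\'atal setting, one cannot directly appeal to the Bondy--Chv\'atal closure, since no satisfactory digraph closure respecting the Nash-Williams condition is known; any closure-based shortcut would need to be developed essentially from scratch.
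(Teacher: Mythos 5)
You should first note that the statement you are trying to prove is Conjecture~\ref{nw}, not a theorem of the paper: the paper explicitly states that no progress has been made on it and proves only the approximate version, Theorem~\ref{approxnw}, in which $i+1$ is relaxed to $i+\eta n$ and the index $n-i$ is shifted to $n-i-\eta n$. So there is no proof in the paper to compare against, and your proposal, as written, is not a proof either --- it is a research programme whose decisive step (Phase~2) you yourself flag as ``the main obstacle.'' That step is precisely the open problem. Your plan presumes that every digraph satisfying the exact condition but failing the approximate one at some index is ``pinned down to a bounded family of near-extremal configurations'' resembling the Chv\'atal tight examples; but the authors remark at the end of Section~\ref{sec1} that such an extremal/stability analysis ``seems quite difficult as there are many different types of digraphs which come close to violating the conditions'' (in contrast to the oriented-graph situation of~\cite{kko}), and Section~\ref{extremal} already exhibits several structurally distinct extremal families. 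Moreover, failure of the approximate condition at a single index $i$ only tells you that both $d^+_i< i+\eta n$ and $d^-_{n-i-\eta n}< n-i$, i.e.\ some local near-tightness in the degree sequence; this is far from forcing a global semicomplete-bipartite-type structure, and nothing in your Phase~2 sketch bridges that gap.

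Two further concrete problems. First, in Phase~1 the threshold $n_0(\eta)$ supplied by Theorem~\ref{approxnw} comes through the Diregularity lemma (Lemma~\ref{dilemma}), so it is of tower type; ``a finite case analysis, possibly with computer assistance'' for all $n<n_0$ is not a viable fallback, and the conjecture is claimed for all $n\ge 3$. Second, even the much weaker strengthening in which the degrees in the first clauses are merely capped at $n/2$ (keeping the $\eta n$ error terms elsewhere) is the main result of~\cite{CKKOsemi} and is described in the paper as considerably more difficult than Theorem~\ref{approxnw} --- a good calibration of how far your Phase~2/Phase~3 outline is from closing the remaining distance. In short: Phase~1 is a correct but routine reduction, Phase~3 is plausible only once Phase~2 is done, and Phase~2 is the unsolved content of Conjecture~\ref{nw} itself, so the proposal contains a genuine and essential gap rather than a proof.
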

No progress has been made on this conjecture so far (see also~\cite{bang}).
It is even an open problem whether the conditions imply the existence of a cycle through
any pair of given vertices (see~\cite{bt}).

As discussed in Section~\ref{extremal}, one cannot omit the condition that~$G$ is strongly connected. 
At first sight one might also try to replace the degree condition in Chv\'atal's theorem by 
\begin{itemize}
\item $d^+ _i \geq i+1 $ or $d^+ _{n-i} \geq n-i$, 
\item $d^- _i \geq i+1 $ or $d^- _{n-i} \geq n-i$. 
\end{itemize} 
However, Bermond and Thomassen~\cite{bt} observed that the latter conditions do not
guarantee Hamiltonicity. Indeed, consider the digraph obtained from the complete digraph~$K$
on $n-2\ge 4$ vertices by adding two new vertices~$v$ and~$w$ which both send an edge to
every vertex in~$K$ and receive an edge from one fixed vertex $u\in K$.%
    \COMMENT{For all $x \in K$, $d^- (x) =n-1$. So since $|K|> n/2$ certainly $d^- _{n-i} \geq n-i$ for all $i <n/2$. 
Furthermore, $d^+ (u)=n-1$, $d^+ (v)=d^+ (w)=n-2$ and $d^+ (x)=n-3 \ge \lceil n/2 \rceil -1+1$ for all $x \in K$. So $d^+ _{i} \geq i+1$
for all $1 \le i< n/2$.}

The following example shows that the 
degree condition in Conjecture~\ref{nw} would be best possible in the sense that for all
$n\ge 3$ and all $k<n/2$ there is a non-Hamiltonian strongly connected digraph~$G$ on~$n$ vertices
which satisfies the degree condition except that $d^+_k,d^-_k\ge k+1$ are replaced by
$d^+_k,d^-_k\ge k$ in the $k$th pair of conditions.
To see this, take an independent set~$I$ of size $k<n/2$ 
and a complete digraph~$K$ of order~$n-k$. Pick a set~$X$ of~$k$ vertices of~$K$
and add all possible edges (in both directions) between~$I$ and~$X$. The digraph~$G$
thus obtained is strongly connected, not Hamiltonian and
$$\underbrace{k, \dots ,k}_{k \text{ times}}, \underbrace{n-1-k, \dots , n-1-k}_{n-2k
\text{ times}}, \underbrace{n-1, \dots , n-1}_{k \text{ times}}$$ is both the out- and
indegree sequence of~$G$.
A more detailed discussion of extremal examples is given in Section~\ref{extremal}.
%We do not know whether there is an example of a non-Hamiltonian strongly connected digraph 
%which satisfies the degree conditions of Conjecture~\ref{nw} except that only the inequality
%$d^+_k\ge k+1$ is replaced by $d^+_k\ge k$.
    \COMMENT{This comment is now obsolete?
We know of the existence of non-Hamiltonian digraphs $G$ that satisfy the degree conditions in
Conjecture~\ref{nw}, except $d^+ _{k_1} \geq k_1+1$ and $d^+ _{k_2}\geq k_2 +1$ are replaced by 
$d^+ _{k_1} \geq k_1$ and $d^+ _{k_2}\geq k_2 $ respectively, for $k_1 \not = k_2 <|G|/2$.
Indeed, let $n \geq 6$ be even and $K$ be a complete digraph on $n/2+1$ vertices where
$V(K)=\{u,v,x_1,\dots,x_{n/2-1}\}$. Let $I$ be an independent set of $n/2-1$ vertices such that
$I=\{y_1, \dots, y_{n/2-1}\}$. We obtain the digraph $G$ from the disjoint union of $K$ and $I$
by adding edges $x_iy_i, y_ix_i \in E(G)$ precisely when $i \leq j$, and adding all possible edges between
$u$ and $I$ (in both directions).
$G$ has in- and outdegree sequence $2,3, \dots, n/2-1, n/2 , n/2 ,n/2+1 , \dots , n-2 , n-1 ,n-1$ and
contains precisely two Hamilton cycles, namely $uy_1 x_1 \dots y_{n/2-1} x_{n/2-1} v $
and $v x_{n/2-1} y_{n/2-1} \dots x_1 y_1 u$.
Given $1\leq k_1 <k_2\leq n/2-1$ remove the edges $y_{k_1} x_{k_1}$ and $y_{k_2}x_{k_2-1}$ from $G$. 
So $G$ in now non-Hamiltonian. However, since no vertex in $I$ lost an inneighbour, the first 
$n/2-1$ terms in the indegree sequence of $G$ remain unchanged. Of the first $n/2-1$ terms
in the outdegree sequence of $G$ only the $(k_1)$th and $(k_2)$th terms have been altered.
}

In this paper we prove the following approximate version of Conjecture~\ref{nw} for large digraphs. 
\begin{thm}\label{approxnw}
For every $\eta >0$ there exists an integer $n_0 =n_0 (\eta)$ such that the following holds.
Suppose $G$ is a digraph on $n \geq n_0$ vertices  such that for all $i < n/2$
\begin{itemize}
\item $ d^+ _i \geq i+ \eta n  $ or $ d^- _{n-i- \eta n} \geq n-i $,
\item $ d^- _i \geq i+ \eta n $ or $ d^+ _{n-i- \eta n} \geq n-i .$
\end{itemize}Then $G$ contains a Hamilton cycle.
\end{thm}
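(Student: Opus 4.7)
The plan is to establish a dichotomy between an \emph{extremal} case, in which $G$ is structurally close to one of the tight examples exhibited in the excerpt, and a \emph{non-extremal} case. In the non-extremal case, the Chv\'atal-type conditions should force a robust out-expansion property, after which Hamiltonicity follows by invoking the machinery of robust expanders (the Directed Regularity Lemma together with the Blow-up Lemma for digraphs, as developed in earlier work of K\"uhn and Osthus). In the extremal case, the structure is rigid enough that a Hamilton cycle can be built essentially by hand, exploiting the $\eta n$ slack in the hypothesis.

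First, I would extract some preliminary consequences of the degree hypothesis: choosing $i$ just above $\eta n$ in each condition shows that only a small number of vertices have very small in- or out-degree; summing the degree sequence shows $G$ is reasonably dense; and a direct cut argument using the $\eta n$ slack shows $G$ is strongly connected. These bounds let us isolate a small exceptional set of "bad" vertices that can be handled separately at the very end via short connecting paths.

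Next, apply the Directed Szemer\'edi Regularity Lemma to obtain a regular partition $V_0, V_1, \dots, V_L$ with reduced multidigraph $R$, where $R$ inherits (up to an error of order $(\eps + d)n$ per degree) the degree profile of $G$. I would then prove the key structural dichotomy: either $R$ contains a pair of vertex sets that mimic the independent set $I$ and the controlling set $X$ of the extremal example (in which case $G$ itself is close to extremal), or $R$ is a robust out-expander with minimum semi-degree at least roughly $(1/2 + \eta/4)L$. In the latter non-extremal case, a Hamilton cycle of $R$ produced by the Hamiltonicity results for robust expanders, combined with the Blow-up Lemma and a standard connecting argument, lifts to a Hamilton cycle of $G$ after absorbing the exceptional set $V_0$.

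In the extremal case, the structure of $G$ is essentially a complete digraph on $n-k$ vertices plus a sparse remainder of size $k < n/2$; the $\eta n$ slack in the hypothesis provides additional edges beyond the pure extremal example, which is precisely what is needed to cover the sparse remainder by short paths routed through the dense part and then to close them into a single Hamilton cycle. The main obstacle I expect is the non-extremal $\Rightarrow$ robust-expansion step: Chv\'atal-type hypotheses permit vertices of arbitrarily small in- or out-degree, so expansion cannot be read off from a uniform minimum semi-degree but must be deduced from the aggregate degree profile of \emph{most} vertices, with a careful case analysis ruling out every type of expansion-breaking subset unless one is genuinely close to the extremal construction.
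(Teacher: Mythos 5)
There is a genuine gap, and it sits exactly where you flag ``the main obstacle'': the step from the degree hypothesis to robust out-expansion. Your plan defers this to a dichotomy in the reduced digraph $R$ (either an $I$/$X$-type configuration appears, or $R$ has minimum semi-degree roughly $(1/2+\eta/4)|R|$), but the second horn is false. The hypothesis allows, for example, $d^+_i = i+\eta n$ for all $i<n/2$, so a quarter of the vertices may have out-degree about $n/4+\eta n$, far below $n/2$, in digraphs that resemble none of the tight examples; non-extremality does not restore a Dirac-type minimum semi-degree, so you cannot hand the non-extremal case to a minimum-semi-degree Hamiltonicity result. Your premise that ``Chv\'atal-type hypotheses permit vertices of arbitrarily small in- or out-degree'' is also wrong for this approximate version: if $d^+_1<1+\eta n$ then condition (i) with $i=1$ gives $d^-_{n-1-\eta n}\ge n-1$, i.e.\ at least $\eta n+1$ vertices of full indegree, whence $\delta^+(G)\ge\eta n$, and similarly $\delta^-(G)\ge\eta n$. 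Finally, the extremal branch you sketch (``essentially a complete digraph on $n-k$ vertices plus a sparse remainder'') is not a faithful description of the near-extremal digraphs: Section~\ref{extremal} exhibits several structurally different tight configurations, and the paper explicitly remarks that a stability analysis is difficult for precisely this reason. As proposed, neither branch of your dichotomy closes.

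The actual proof needs no extremal case at all: the $\eta n$ slack makes \emph{every} digraph satisfying the hypothesis a robust outexpander, by a direct counting argument on the degree sequence (Lemma~\ref{robustG}). For any $S$ with $\tau n<|S|<(1-\tau)n$, either $d^+_{|S|-\lfloor\tau n\rfloor}\ge |S|+\eta n/2$, in which case $\lfloor\tau n\rfloor$ vertices of $S$ each send out $|S|+\eta n/2$ edges and double counting forces $|RN^+_{\tau^2,G}(S)|\ge |S|+2\tau^2 n$; or the alternative condition supplies at least $|S|+\eta n/2$ vertices of indegree at least $n-|S|+\tau^2 n$, each of which must already lie in $RN^+_{\tau^2,G}(S)$. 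Hamiltonicity of robust outexpanders with $\delta^0\ge\eta n$ (Theorem~\ref{expanderthm}) is then proved via the Diregularity lemma, the fact that robust expansion passes to the reduced digraph (Lemma~\ref{robustR}), a random selection of a spanning \emph{oriented} subgraph $R^*$ that preserves expansion (Lemma~\ref{orientexp}) --- a step your outline omits but which is forced because the lifting result from~\cite{kko} applies to oriented graphs --- and finally Lemma~\ref{cyclelemma} rather than a Blow-up Lemma argument. If you want to salvage your outline, replace the dichotomy by a direct proof that the degree conditions imply robust outexpansion; everything else then falls into the non-extremal pipeline you describe.
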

Instead of proving Theorem~\ref{approxnw} directly, we will prove the existence of a
Hamilton cycle in a digraph satisfying a certain expansion property (Theorem~\ref{expanderthm}).
We defer the precise statement to Section~\ref{sec4}.

The following weakening of Conjecture~\ref{nw} was posed earlier by Nash-Williams~\cite{ch1,ch2}.
It would yield a digraph analogue of P\'osa's theorem which states that a graph~$G$ on~$n\ge 3$
vertices has a Hamilton cycle if its degree sequence $d_1, \dots , d_n$ satisfies $d_i \geq i+1$ 
for all $i<(n-1)/2$ and if additionally $d_{\lceil n/2\rceil} \geq \lceil n/2\rceil$ when~$n$ is odd~\cite{posa}. 
Note that this is much stronger than Dirac's theorem but is a special case of Chv\'atal's theorem.
\begin{conj}[Nash-Williams~\cite{ch1,ch2}]\label{nw2}
Let $G$ be a digraph on $n \geq 3$ vertices such that $d^+ _i,d^-_i \geq i+1 $
for all $i <(n-1)/2$ and such that additionally
$d^+_{\lceil n/2\rceil},d^-_{\lceil n/2\rceil} \geq \lceil n/2\rceil$ when~$n$ is odd. 
Then~$G$ contains a Hamilton cycle.
\end{conj}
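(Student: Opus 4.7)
The plan is to derive an approximate version of Conjecture~\ref{nw2} as a direct corollary of Theorem~\ref{approxnw} and then attempt to close the remaining extremal gap. First, observe that the hypotheses $d^+_i, d^-_i \geq i+1$ for $i<(n-1)/2$ are strictly stronger than the first alternative in each pair of conditions of Conjecture~\ref{nw}, so for any fixed $\eta>0$ and all sufficiently large $n$ the slightly weakened requirement $d^+_i, d^-_i \geq i+\eta n$ for $i<n/2$ lets Theorem~\ref{approxnw} supply a Hamilton cycle immediately. Before invoking it one should also verify that the hypotheses of Conjecture~\ref{nw2} force $G$ to be strongly connected: the bounds $d^+_{\lfloor n/2\rfloor -1},d^-_{\lfloor n/2\rfloor-1}\geq \lfloor n/2\rfloor$ ensure that at least $\lceil n/2\rceil+1$ vertices have in- and outdegree $\geq \lfloor n/2\rfloor$, from which strong connectivity follows by a standard pigeonhole on potential separating cuts.

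The real obstacle is closing the $\eta n$ gap between $d^+_i\geq i+\eta n$ and the exact $d^+_i\geq i+1$. I would attack this via a stability dichotomy. Either the degree sequence lies far from the extremal family described after Conjecture~\ref{nw} (namely a complete digraph on $n-k$ vertices attached to an independent set of size $k<n/2$ through a set $X$ of size $k$), in which case one should be able to absorb the extra slack into the expansion property underlying Theorem~\ref{expanderthm} and recover a Hamilton cycle directly; or the sequence is close to such an extremal configuration. In the latter case the structure theorem yields a canonical near-partition of $V(G)$ into a large independent set $I$, a near-clique $K$, and a small interface $X\subseteq K$, with all but $o(n)$ edges between $I$ and $K$ lying between $I$ and $X$.

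For almost-extremal $G$ the argument would then become combinatorial rather than extremal: any Hamilton cycle must alternate between $I$ and $X$ on its $I$-portion, and the degree condition $d^+_i,d^-_i\geq i+1$ at the critical index $i=|I|-1$ forces enough in- and out-edges at each vertex of $I$ to let a rotation-extension argument (of P\'osa type, adapted to digraphs) weave a Hamilton cycle explicitly. The odd-$n$ boundary condition $d^{\pm}_{\lceil n/2\rceil}\geq \lceil n/2\rceil$ is precisely what excludes a balanced bipartite-like obstruction in this final step.

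I expect the stability step to be the main difficulty: Theorem~\ref{expanderthm} is robust but quantitatively consumes a linear slack, and eroding that slack down to the additive $+1$ of Conjecture~\ref{nw2} requires identifying every tight edge configuration and showing it is incompatible with the symmetric indegree hypothesis. The approximate reduction is essentially for free; the extremal analysis, in my view, would carry the weight of the argument.
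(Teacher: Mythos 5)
This statement is Conjecture~\ref{nw2}, which the paper does not prove: it is an open problem of Nash-Williams, and the paper establishes only the approximate version, namely Corollary~\ref{posa}, in which the additive constant $+1$ is replaced by a linear error term $+\eta n$. The first part of your proposal --- checking that the hypotheses of Conjecture~\ref{nw2} are (for large $n$, after weakening $+1$ to $+\eta n$) an instance of the hypotheses of Theorem~\ref{approxnw}, so that a Hamilton cycle follows --- is correct and is exactly how the paper deduces Corollary~\ref{posa}. (The remark about strong connectivity is not even needed there, since Theorem~\ref{approxnw} does not assume it; the paper only notes in passing that the degree conditions of Conjecture~\ref{nw2} imply strong connectivity.) But everything after that is a research programme, not a proof, and the conjecture itself remains unproved.

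The concrete gap is in your stability dichotomy. You assume that the only near-extremal configurations are the ``independent set $I$ plus clique $K$ glued along an interface $X$'' family from the introduction, and you propose a rotation--extension argument tailored to that single structure. The paper explicitly warns against this: it states that eliminating the error terms by a stability analysis ``seems quite difficult as there are many different types of digraphs which come close to violating the conditions in Conjectures~\ref{nw} and~\ref{nw2}'' --- see also the further extremal constructions in Section~\ref{extremal}, which are genuinely different from the $I$--$X$--$K$ family (e.g.~the example built from two cliques $K$, $K'$ with the vertices $u,v,w$). So the ``structure theorem'' you invoke for the almost-extremal case does not exist in the form you describe, and the claimed canonical near-partition is not forced. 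Moreover, no quantitative mechanism is given for ``absorbing the slack'' into the expansion property of Theorem~\ref{expanderthm} in the non-extremal case: the robust outexpansion established in Lemma~\ref{robustG} genuinely consumes the $\eta n$ surplus, and with only $d^\pm_i\ge i+1$ the digraph need not be a robust outexpander at all. The strongest known result in this direction, cited in the paper, only caps the degree requirement at $\min\{i+\eta n, n/2\}$ and is said to be considerably harder than Theorem~\ref{approxnw}; the exact statement you set out to prove is still open.
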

The previous example shows that the degree condition would be best possible in the same sense as described there. 
The assumption of strong connectivity is not necessary in Conjecture~\ref{nw2},
as it follows from the degree conditions.
The following approximate version of Conjecture~\ref{nw2}
is an immediate consequence of Theorem~\ref{approxnw}.
\begin{cor}\label{posa}
For every $\eta >0$ there exists an integer $n_0 =n_0 (\eta)$ such that every digraph~$G$
on $n \geq n_0$ vertices with $ d^+ _i, d^-_i \geq i+ \eta n  $ for all $i < n/2$
contains a Hamilton cycle.
\end{cor}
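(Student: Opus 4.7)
The plan is very short: Corollary~\ref{posa} should follow immediately from Theorem~\ref{approxnw} by checking that its hypotheses are satisfied under the stronger assumption $d^+_i,d^-_i\ge i+\eta n$ for all $i<n/2$.

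Concretely, I would fix $\eta>0$ and take $n_0=n_0(\eta)$ to be the constant provided by Theorem~\ref{approxnw}. Given a digraph~$G$ on $n\ge n_0$ vertices satisfying $d^+_i,d^-_i\ge i+\eta n$ for every $i<n/2$, I would verify each of the two disjunctive conditions of Theorem~\ref{approxnw}. For any such~$i$, the hypothesis $d^+_i\ge i+\eta n$ is precisely the first disjunct of condition~(i) of Theorem~\ref{approxnw}, so that condition holds regardless of the values of the indegree sequence. Symmetrically, $d^-_i\ge i+\eta n$ yields the first disjunct of condition~(ii). Thus all hypotheses of Theorem~\ref{approxnw} are met, and we conclude that $G$ contains a Hamilton cycle.

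There is really no obstacle here; the only thing to observe is that the corollary discards the indegree-outdegree ``or'' alternatives in Theorem~\ref{approxnw} and simply uses the more restrictive of the two options in each clause. No separate argument, no extremal analysis, and no expansion property need be invoked beyond what is already packaged inside Theorem~\ref{approxnw}.
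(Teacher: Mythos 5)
Your proposal is correct and matches the paper exactly: the paper states Corollary~\ref{posa} as an immediate consequence of Theorem~\ref{approxnw}, and the intended argument is precisely the one you give, namely that $d^+_i,d^-_i\ge i+\eta n$ supplies the first disjunct of each of the two conditions in Theorem~\ref{approxnw}. Nothing further is needed.
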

In Section~\ref{orient} we give a construction which shows that for oriented graphs
there is no analogue of P\'osa's theorem. (An oriented graph is a digraph with no $2$-cycles.)

It will turn out that the conditions of Theorem~\ref{approxnw} even
guarantee the digraph~$G$ to be \emph{pancyclic}, i.e.~$G$ contains a cycle of length~$t$
for all $t=2,\dots,n$.
\begin{cor}\label{pancyclic}
For every $\eta >0$ there exists an integer $n_0 =n_0 (\eta)$ such that the following holds.
Suppose $G$ is a digraph on $n \geq n_0$ vertices such that for all $i < n/2$
\begin{itemize}
\item $ d^+ _i \geq i+ \eta n $ or $ d^- _{n-i- \eta n} \geq n-i $,
\item $ d^- _i \geq i+ \eta n \text{ or } d^+ _{n-i- \eta n} \geq n-i .$
\end{itemize} Then $G$ is pancyclic.
\end{cor}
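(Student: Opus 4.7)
The plan is to deduce pancyclicity by invoking Theorem~\ref{approxnw} many times, on carefully chosen induced subdigraphs, with a separate treatment of very short cycles. The Hamilton cycle ($t=n$) is immediate from Theorem~\ref{approxnw}. For a $2$-cycle, the degree hypotheses force many edges: in particular, for $i=1$ either $d^+_1\ge 1+\eta n$ or a linear-size set of vertices has indegree $n-1$; in either case one can extract two vertices $u,v$ with edges in both directions, giving a cycle of length $2$.

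For each intermediate target length $t$ satisfying $\eta n \le t \le n-1$, I would show the existence of a random induced subdigraph $G'=G[V']$ with $|V'|=t$ that still satisfies the degree hypothesis of Theorem~\ref{approxnw} with parameter~$\eta/2$ in place of~$\eta$. Concretely, pick $V'$ uniformly at random of size $t$. For each vertex $v\in V'$, the restricted out- and indegrees $d^+_{G'}(v)$ and $d^-_{G'}(v)$ concentrate around $(t/n)d^+_G(v)$ and $(t/n)d^-_G(v)$ by standard hypergeometric (or Chernoff-type) bounds; a union bound over the $O(n)$ ranks shows that with positive probability every one of the inherited "or" conditions survives with a slack of $\eta n/2$. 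A Hamilton cycle in $G'$, provided by Theorem~\ref{approxnw} applied with parameter $\eta/2$ (which requires $t\ge n_0(\eta/2)$, hence $t\ge \eta n$ for $n$ large), is then a cycle of length $t$ in $G$.

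For short lengths $3\le t<\eta n$, the degree hypothesis with $i=1$ forces each vertex to have outdegree at least $1+\eta n$ or to be dominated (through indegrees) by a set of $\eta n$ vertices of near-maximum indegree; either way, one gets a robust "every vertex has many out- and in-neighbours" statement of the form $\delta^0(G)\ge \eta n/2$ after discarding a bounded number of exceptional vertices. From this, a greedy construction builds a directed cycle of any prescribed length $t\le \eta n/2$: start at an arbitrary vertex, extend a directed path one vertex at a time (always having $\ge \eta n/2 - t$ choices at the current endpoint), and close the path at step $t-1$ using the fact that its endpoint has at least $\eta n/2 - t \ge 1$ out-neighbours among the in-neighbours of the start.

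The main obstacle, I expect, is the intermediate-length case: the "or" structure of the hypothesis is not obviously preserved under random restriction, because the second alternative talks about a rank $n-i-\eta n$ that scales with~$n$, not with $t$. The delicate point is verifying that, when the first alternative fails in $G$ at some rank $i$, the second alternative translates correctly to $G'$ at the rescaled rank. If a uniformly random $V'$ does not suffice, a biased choice—e.g.\ always retaining the $\eta n$ vertices of largest in- and out-degree—should close the gap and preserve the rank structure after restriction.
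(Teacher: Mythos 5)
Your reduction for intermediate lengths (random induced subdigraph plus Theorem~\ref{approxnw} with parameter $\eta/2$) is plausible and could probably be pushed through --- the rank $i$ in $G$ rescales to rank roughly $ti/n$ in $G'$, and since $t\ge\eta n$ the hypergeometric concentration you invoke survives a union bound --- but the proof genuinely breaks in the two ranges you treat by hand. For $3\le t<\eta n$ you retain only $\delta^0(G)\ge\eta n/2$ from the hypothesis, and that is not enough to produce a cycle of every short length: the blow-up of a directed cycle of length $\lceil 2/\eta\rceil$ has minimum semi-degree about $\eta n/2$ yet contains no cycle whose length fails to be a multiple of $\lceil 2/\eta\rceil$. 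Concretely, the closing step of your greedy construction fails because the out-neighbourhood of the path's endpoint (size $\ge\eta n/2$) and the in-neighbourhood of the start (size $\ge\eta n/2$) need not intersect when $\eta$ is small, and nothing in the construction keeps the path inside $N^-$ of the starting vertex. The $t=2$ case has the same defect: in the branch $d^+_1\ge 1+\eta n$ you only know a large minimum outdegree, which any tournament also has, so you cannot ``extract two vertices with edges in both directions'' from that alone.

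What is missing is the one idea that makes the paper's proof work for all $t$ simultaneously: the degree hypothesis forces a single vertex $x$ with $d^+(x)+d^-(x)\ge n$. (One finds an index $j$ with $d^+_j+d^-_{n-j}\ge n$ by a short case analysis on where the sequence $d^+_i$ first drops to the value $i$; then at least $n-j+1$ vertices have outdegree at least $d^+_j$ while at least $j+1$ have indegree at least $d^-_{n-j}$, so some vertex enjoys both bounds.) Deleting $x$, checking that $G-x$ still satisfies the hypothesis with $\eta/2$ in place of $\eta$, and taking a Hamilton cycle $x_1\cdots x_{n-1}$ of $G-x$ from Theorem~\ref{approxnw}, the pigeonhole principle (since $d^+(x)+d^-(x)>n-1$) yields for every $2\le t\le n$ an index $i$ with $x_i\in N^+(x)$ and $x_{i+t-2}\in N^-(x)$, so that $xx_i\cdots x_{i+t-2}x$ is a cycle of length $t$. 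This single application of Theorem~\ref{approxnw} replaces your entire intermediate-range machinery and, more importantly, supplies exactly the short cycles (including $t=2$) that your argument cannot produce.
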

Thomassen~\cite{tom} proved an Ore-type condition which
implies that  every digraph with  minimum
in- and outdegree $>n/2$ is pancyclic. (The complete bipartite digraph whose vertex class sizes
are as equal as possible shows that the latter bound is best possible.)
Alon and Gutin~\cite{ag} observed that one can use 
Ghouila-Houri's theorem to show that every digraph $G$ with minimum
in- and outdegree $>n/2$ is even vertex-pancyclic.%
   \COMMENT{In the digraphs book it says degree at least $n/2+1$. But $>n/2$ already works
and is best possible (complete bip digraph).}
Here a digraph~$G$ is called {\it vertex-pancyclic} if every vertex of~$G$ lies
on a cycle of  length~$t$ for all $t=2,\dots,n$. In Proposition~\ref{vertexpan} we show that
one cannot replace pancyclicity by vertex-pancyclicity in Corollary~\ref{pancyclic}.
Minimum degree conditions for (vertex-) pancyclicity of oriented graphs are discussed
in~\cite{kelly2}. 

Our result on Hamilton cycles in expanding digraphs (Theorem~\ref{expanderthm}) is used as a
tool in~\cite{KMO} to prove an approximate version of Sumner's universal tournament conjecture.
Theorem~\ref{expanderthm} also has an application
to a conjecture of Thomassen on tournaments.
A \emph{tournament} is an orientation of a complete graph. We say that a tournament is {\it regular} if every vertex has 
equal in- and outdegree. Thus
regular tournaments contain an odd number $n$ of vertices and each vertex has in- and outdegree  $(n-1)/2$. 
It is easy to see that every regular tournament contains a Hamilton cycle. Thomassen~\cite{tom1}
conjectured that even if we remove a number of
edges from a regular tournament $G$, the remaining oriented graph still contains a Hamilton cycle.
\begin{conj}[Thomassen~\cite{tom1}] \label{thomconj}
If $G$ is a regular tournament on $n$ vertices and $A$ is any set of less than
$(n-1)/2$ edges of $G$, then $G-A$ contains a Hamilton cycle.
\end{conj}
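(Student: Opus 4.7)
The plan is to deduce the existence of a Hamilton cycle in $H:=G-A$ from Theorem~\ref{expanderthm}, our general result on Hamilton cycles in expanding digraphs. The first, essentially routine, step is a degree-deficit observation: each edge of $A$ reduces exactly one outdegree and one indegree of $G$ by one, so
\[
\sum_{v\in V(G)}\bigl(d^+_G(v)-d^+_H(v)\bigr) \;=\; |A| \;<\; \frac{n-1}{2},
\]
and the analogous identity holds for indegrees. Since each vertex of the regular tournament $G$ starts with $d^+_G(v)=d^-_G(v)=(n-1)/2$, for every integer $k\ge 1$ fewer than $(n-1)/(2k)$ vertices of $H$ can have lost more than $k$ out-edges, and similarly for in-edges. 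In particular, all but a tiny ``damaged'' set $D\subseteq V(G)$ of vertices still have in- and outdegree very close to $(n-1)/2$ in $H$.

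Next I would verify that $H$ inherits the strong expansion required by Theorem~\ref{expanderthm}. In any regular tournament, a set $S$ with $|S|\le n/2$ satisfies $|N^+_G(S)|\ge |S|+\Omega(|S|)$ with considerable room to spare, since each vertex of $S$ already sends out $(n-1)/2$ edges in $G$; removing fewer than $(n-1)/2$ out-edges in total across all of $V(G)$ can shrink $|N^+_H(S)|$ by only a handful for most $S$. Combined with the sharp bound on $|D|$ from the previous paragraph, this yields the robust out- and in-neighbourhood expansion needed to invoke Theorem~\ref{expanderthm}, at least on $V(G)\setminus D$.

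The main obstacle, and the reason the conjecture is genuinely hard, is that Theorem~\ref{expanderthm} carries an $\eta n$ slack in its hypotheses, whereas Thomassen's conjecture is sharp up to a single edge; a direct application therefore yields only the weaker bound $|A|\le (1/2-\eta)n$ for sufficiently large $n$. To push down to the sharp threshold $|A|<(n-1)/2$, I would isolate the damaged vertices in $D$ (in the worst case a single vertex which has lost almost half of its out- or in-edges) and treat them by an absorbing-type argument: for each $v\in D$, pre-construct a short path $P_v$ through $v$ using only surviving edges of $H$, incorporate the $P_v$ into a flexible absorbing structure, and then apply the expansion-based Hamilton cycle theorem to the reduced digraph obtained by contracting each $P_v$ to a single edge. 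The delicate point is building the paths $P_v$ when $v$ is just one edge short of having lost half of its outdegree (or indegree), because then both the inward and outward flexibility needed to route through $v$ is nearly exhausted; this is precisely where the sharp $(n-1)/2$ threshold exerts its full force. I expect this absorbing step to be the main barrier, and indeed it is essentially for this reason that the conjecture remains open in its sharp form: the expansion machinery handles everything else, but closing the final $o(n)$ gap at a vertex of near-critical degree loss requires an argument tailored to the exact arithmetic of the $(n-1)/2$ threshold rather than to asymptotic expansion.
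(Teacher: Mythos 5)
There is a genuine gap, and it lies exactly where you stop: your final paragraph declares the sharp threshold a barrier requiring an absorbing argument and, in effect, leaves the statement unproved, whereas the paper (Corollary~\ref{thomconjlarge}) closes this case for large $n$ with a two-line trick and no absorption whatsoever. You have also misread where the $\eta n$ slack in Theorem~\ref{expanderthm} bites. It is a slack in the \emph{minimum semidegree} hypothesis $\delta^0\ge\eta n$, not a cap of the form $|A|\le(1/2-\eta)n$ on the number of deleted edges: removing fewer than $(n-1)/2$ edges in total perturbs each $\nu$-robust outneighbourhood by $O(1)$ vertices (a vertex leaves $RN^+_{\nu}(S)$ only if it loses at least $\nu n/2$ in-edges, and at most $|A|/(\nu n/2)=O(1/\nu)$ vertices can do so), so $G-A$ remains a robust $(\nu/2,\tau)$-outexpander for \emph{any} $|A|<(n-1)/2$. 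The only thing that can fail is the semidegree hypothesis, and since each deficient vertex must lose at least $(n-1)/2-\eta n$ edges while $|A|<(n-1)/2$, \emph{at most one} vertex $x$ can have $d^+_{G-A}(x)<\eta n$ or $d^-_{G-A}(x)<\eta n$.

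That single vertex needs no absorbing structure. The bound $|A|<(n-1)/2=d^+_G(x)$ is used sharply in exactly one place: it guarantees $d^+_{G-A}(x)\ge 1$, i.e.~some out-edge $xy$ survives. The paper then contracts this edge: delete $x$ and $y$ and add a new vertex $z$ with $N^+(z):=N^+_{G-A}(y)$ and $N^-(z):=N^-_{G-A}(x)$. The resulting digraph $G'$ satisfies $\delta^0(G')\ge\eta n-2$ (the in-degree of $x$ and both degrees of $y$ are nearly intact, since almost all of $A$ is concentrated on the out-edges of $x$) and is still a robust $(\nu/3,2\tau)$-outexpander, so Theorem~\ref{expanderthm} applies, and a Hamilton cycle of $G'$ through $z$ expands, via the edge $xy$, to one of $G-A$. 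This also explains why $(n-1)/2$ is the correct threshold: with $(n-1)/2$ deletions one could remove \emph{all} out-edges of a vertex, destroying even strong connectivity — the arithmetic of the threshold is entirely about this one vertex retaining one out-edge, not about any delicate routing. Your first two paragraphs (the degree-deficit count and the inherited expansion, which the paper verifies via Proposition~\ref{kands} for sets of size near $n/2$) are sound, but as written your proposal asserts the remaining step is out of reach, so it does not constitute a proof.
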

In Section~\ref{torn} we prove Conjecture~\ref{thomconj} for sufficiently large regular tournaments.
Note that Conjecture~\ref{thomconj} is a weakening of the following conjecture of Kelly~(see e.g.~\cite{bang,bondy,moon}).
\begin{conj}[Kelly] \label{kelly}
Every regular tournament on $n$ vertices can be decomposed
into $(n-1)/2$ edge-disjoint Hamilton cycles.
\end{conj}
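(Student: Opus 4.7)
Kelly's conjecture is far more demanding than Conjecture~\ref{thomconj}: rather than finding a single Hamilton cycle in a slightly perturbed regular tournament, it asks for an edge-decomposition of $G$ into $(n-1)/2$ Hamilton cycles. The natural first instinct is to iterate the Hamilton cycle results of this paper. Since $G$ is regular with in- and outdegree exactly $(n-1)/2$, removing a Hamilton cycle $C$ decreases every in- and outdegree by exactly one, so the leftover oriented graph $G-C$ is again regular, with in- and outdegree $(n-3)/2$; if one could always locate a Hamilton cycle in the current leftover, iteration would yield the decomposition.

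The plan I would pursue is therefore to use Theorem~\ref{expanderthm} as the engine of this iteration, running it for as long as the current leftover digraph is a sufficiently strong expander. A regular oriented graph on $n$ vertices with linear minimum in- and outdegree should be a robust expander, and since regularity is preserved under the removal of Hamilton cycles, one ought to be able to extract Hamilton cycles greedily until only about $\eta n$ cycles remain, producing an approximate decomposition that leaves a regular leftover digraph $H$ with in- and outdegree roughly $\eta n$ for some small $\eta > 0$.

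The fundamental obstacle, and what makes Kelly's conjecture so hard, is handling this sparse leftover $H$. Once the degrees in $H$ drop below the threshold required by any expansion-based Hamiltonicity result, no off-the-shelf tool will produce the remaining $\eta n$ Hamilton cycles; they must be pre-engineered into the construction. The natural strategy is absorption: reserve, at the outset of the argument, a small random sub-digraph $R \subseteq G$ so that, for any regular leftover $H$ of the appropriate density, $R \cup H$ admits a Hamilton decomposition whose cycles can then be combined with those extracted during the greedy phase. Designing and analysing such a robustly absorbing structure---likely via a digraph regularity lemma, a Szemer\'edi-style clean-up of $H$, and rotation-type switching arguments on an auxiliary multigraph whose edges are Hamilton cycles---lies well beyond the methods of the present paper, and I would expect this absorption step, rather than the approximate decomposition, to be the true obstacle. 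Given the difficulty of the absorption phase, a first concrete target would be the weaker Conjecture~\ref{thomconj}, where the goal is only a single Hamilton cycle after deleting a sparse set of edges; here the degree after deletion is still close to $n/2$, so a direct application of the expansion framework has a realistic chance of succeeding.
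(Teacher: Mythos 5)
You were asked to prove Kelly's conjecture, but the paper does not prove it either: it is stated as an open conjecture, the authors only establish the much weaker Corollary~\ref{thomconjlarge} (a single Hamilton cycle survives the deletion of fewer than $(n-1)/2$ edges from a large regular tournament), and they cite~\cite{kelly} for an \emph{approximate} decomposition. So there is no proof in the paper to compare yours against, and your submission is, by your own admission, a research plan rather than a proof. The two genuine gaps in that plan are worth naming precisely. First, the greedy phase is not supported by Theorem~\ref{expanderthm}. Robust $(\nu,\tau)$-outexpansion is stable only under the deletion of $O(\nu n)$ edges at each vertex, so the theorem can be iterated at most roughly $\nu n$ times before its hypothesis may fail; and your fallback claim that a regular oriented graph with linear minimum semi-degree ``should be a robust expander'' is false. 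The blow-up of a directed cycle $C_k$ by independent sets of size $n/k$ is $(n/k)$-regular, yet for $S$ equal to one of the parts one has $|RN^+_{\nu}(S)|=|S|$, so it is not a robust $(\nu,\tau)$-outexpander for any $\nu>0$. Hence regularity of the leftover buys you nothing once its degree drops well below $n/2$, and already reaching a leftover of degree $\eta n$ requires the separate machinery of~\cite{kelly}, not anything in this paper.

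Second, the absorption step for the sparse regular leftover --- which you correctly identify as the heart of the matter --- is entirely absent, and none of the tools here (the Diregularity lemma, Lemma~\ref{cyclelemma}, the robust outexpander framework) supplies it. Your outline of ``approximate decomposition plus absorption of a sparse regular remainder'' is indeed the strategy the literature pursues, but each half is a substantial theorem in its own right, and neither is established by the methods of the present paper. The only result in this direction that these methods do yield is Corollary~\ref{thomconjlarge}, which you rightly single out as the realistic target.
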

In~\cite{kelly} we showed that every sufficiently large regular tournament can be `almost' decomposed into edge-disjoint
Hamilton cycles, thus giving an approximate solution to Kelly's conjecture.

This paper is organized as follows. We first give a more detailed discussion of extremal 
examples for Conjecture~\ref{nw}. After introducing some basic notation, in Section~\ref{sec2}
we then deduce Corollary~\ref{pancyclic} from Theorem~\ref{approxnw} and show that one cannot replace
pancyclicity by vertex-pancyclicity. Our proof of Theorem~\ref{approxnw} uses the
Regularity lemma for digraphs which, along with other tools, is introduced in Section~\ref{sec3}.
The proof of Theorem~\ref{approxnw} is included in Section~\ref{sec4}. It
relies on a result (Lemma~\ref{cyclelemma}) from joint work~\cite{kko} of the first two authors
with Keevash on an analogue of Dirac's theorem for oriented graphs. A related result was
proved earlier in~\cite{approxham}.

It is a natural question to ask whether the `error terms' in Theorem~\ref{approxnw} and
Corollary~\ref{posa} can be eliminated using an `extremal case' or `stability' analysis.
However, this seems quite difficult as there are many different types of digraphs which 
come close to violating the conditions in Conjectures~\ref{nw} and~\ref{nw2}
(this is different e.g.~to the situation in~\cite{kko}). As a step in this direction, very recently it was
shown in~\cite{CKKOsemi} that the degrees in the first parts of the conditions in
Theorem~\ref{approxnw} can be capped at $n/2$, i.e.~the conditions can be replaced by
\begin{itemize}
\item $ d^+ _i \geq \min\{i+ \eta n,n/2\}  $ or $ d^- _{n-i- \eta n} \geq n-i $,
\item $ d^- _i \geq \min\{i+ \eta n,n/2\} $ or $ d^+ _{n-i- \eta n} \geq n-i .$
\end{itemize}
The proof of this result is considerably more difficult than that of Theorem~\ref{approxnw}.
A (parallel) algorithmic version of Chv\'atal's theorem for undirected graphs was recently considered
in~\cite{Sarkozy} and for directed graphs in~\cite{CKKOalgo}. 

\section{Extremal examples for Conjecture~\ref{nw} and a weaker conjecture} \label{extremal}

The example given in the introduction does not quite imply that Conjecture~\ref{nw}
would be best possible, as for some~$k$ it violates both~(i) and~(ii) for $i=k$.
Here is a slightly more complicated example which only violates one of the conditions
for $i=k$ (unless~$n$ is odd and $k=\lfloor n/2 \rfloor$).

Suppose%
    \COMMENT{If $n=4$ then the graph obtained by reversing the edges works.}
$n\geq 5$ and $1\leq k < n/2$.
Let~$K$ and~$K'$ be complete digraphs on $k-1$ and
$n-k-2$ vertices respectively. Let $G$ be the digraph on $n$ vertices obtained from the
disjoint union of~$K$ and~$K'$ as follows. Add all possible edges
from~$K'$ to~$K$ (but no edges from~$K$ to~$K'$) and add
new vertices~$u$ and~$v$ to the digraph such that there are all possible edges 
from~$K'$ to~$u$ and~$v$ and all possible edges from~$u$ and~$v$ to~$K$.
Finally, add a vertex $w$ that sends and receives edges from all other vertices of~$G$
(see Figure~1).
\begin{figure}[htb!]
\begin{center}\footnotesize
\psfrag{1}[][]{$K'$}
\psfrag{2}[][]{$K$}
\psfrag{w}[][]{$w$}
\psfrag{v}[][]{$v$}
\psfrag{u}[][]{$u$}
\includegraphics[width=0.5\columnwidth]{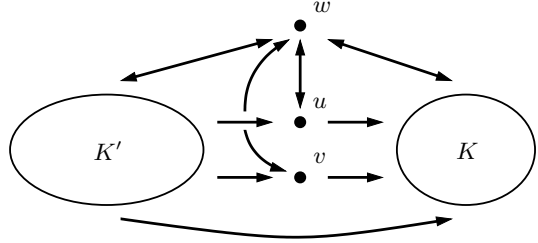}  % Filename.eps
\caption{An extremal example for Conjecture~\ref{nw}}
\label{fig:bridges}
\end{center}
\end{figure}
Thus $G$ is strongly connected, not Hamiltonian and has outdegree sequence
\begin{center}$\underbrace{k-1, \dots ,k-1}_{k-1 \text{ times}}, k, k,
\underbrace{n-1, \dots , n-1}_{n-k-1 \text{ times}}$
\end{center}
and indegree sequence
\begin{center}$\underbrace{n-k-2, \dots ,n-k-2}_{n-k-2 \text{ times}},
n-k-1, n-k-1, \underbrace{n-1, \dots , n-1}_{k \text{ times}}.$
\end{center}
Suppose that either~$n$ is even or, if~$n$ is odd, we have that $k< \lfloor n/2 \rfloor$.
One can check that~$G$ then satisfies the conditions in Conjecture~\ref{nw} except that
$d^+ _{k} = k$ and $d^-_{n-k} =n-k-1$. (When checking the conditions, it is convenient to note
that our assumptions on $k$ and $n$ imply $n-k-1 \ge  \lceil n/2 \rceil$. Hence there are at least
$\lceil n/2 \rceil$ vertices of outdegree $n-1$ and so~(ii) holds for all $i<n/2$.)%
         \COMMENT{Suppose that~$n$ is even. Note that in this case we have
$n-k-1 \ge n-(n/2-1)-1=n/2$. Thus there are $n/2$ vertices of outdegree $n-1$
and so (ii) holds for all $i <n/2$. Clearly~(i) is satisfied for all $i \le k-2$ and all
$k+2 \le i < n/2$.
Now note that 
$d^-_{n-(k-1)}=n-1$ so~(i) holds for $i=k-1$. 
Also $d^-_{n-(k+1)}=n-k-1$ so~(i) holds for $i=k+1$.
Now suppose that $n$ is odd and $ k \le \lfloor  n/2\rfloor-1$.
Then $n-k-1 \ge n- (\lfloor n/2 \rfloor -1)-1 =\lceil n/2 \rceil$. 
So (ii) holds for all $i < n/2$.  The argument for (i) is as before.
Now let $k=\lfloor n/2 \rfloor$. Then in the above example we have
$d^+_k=k$ as before but $d^-_{n-k}=n-k-1$ so~(i) fails.
Also we have $n-k-2=\lceil n/2 \rceil-2=k-1$ and so 
$d^-_{k}=n-k-1=\lceil n/2 \rceil -1=k$ and also
$d^+_{n-k}=d^+_{\lceil n/2 \rceil} =d^+_{k+1}=k<n-k$. So~(ii) fails too.}
If~$n$ is odd and $k=\lfloor n/2 \rfloor$ then conditions~(i) and~(ii) both fail for $i=k$.
We do not know whether a similar construction as above also exists for this case.
It would also be interesting to find an analogous construction as above for Conjecture~\ref{nw2}.

Here is also an example which shows that the assumption of strong connectivity in 
Conjecture~\ref{nw} cannot be omitted. Let $n \geq 4$ be even. Let~$K$ and~$K'$ be 
two disjoint copies of a complete digraph on~$n/2$ vertices.
Obtain a digraph~$G$ from~$K$ and~$K'$ by adding all possible edges from~$K$ to~$K'$ 
(but none from~$K'$ to~$K$). It is easy to see that~$G$ is neither Hamiltonian,
nor strongly connected, but satisfies the condition on the 
degree sequences given in Conjecture~\ref{nw}.

As it stands, the additional connectivity assumption means that 
Conjecture~\ref{nw} does not seem to be a precise digraph analogue of Chv\'atal's theorem:  
in such an analogue, we would ask for a complete characterization of \emph{all}
digraph degree sequences which force Hamiltonicity.
However, it turns out that it makes sense to  replace the strong connectivity assumption with an additional 
degree condition (condition~(iii) below). If true, the following conjecture would provide
the desired characterization.
\begin{conj}\label{nw3}
Suppose that $G$ is a digraph on $n \geq 3$ vertices
such that for all $i < n/2$
\begin{itemize}
\item[(i)]  $d^+ _i \geq i+1 $ or $ d^- _{n-i} \geq n-i $,
\item[(ii)] $ d^- _i \geq i+1$ or $ d^+ _{n-i} \geq n-i$,
\end{itemize}
and such that~{\rm (iii)} $d^+ _{n/2} \geq n/2 $ or $ d^- _{n/2} \geq n/2$ if~$n$ is even.
Then $G$ contains a Hamilton cycle.
\end{conj}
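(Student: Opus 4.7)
The plan is to reduce Conjecture~\ref{nw3} to Conjecture~\ref{nw}: condition~(iii) is exactly what is needed so that (i)--(iii) together force $G$ to be strongly connected, whereupon Conjecture~\ref{nw} (once established) delivers the Hamilton cycle.

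For the reduction, suppose for a contradiction that $G$ is not strongly connected. Pick $u,v\in V(G)$ with no directed $u$-$v$ path and let $A$ be the set of vertices from which $v$ cannot be reached; then $u\in A$, $v\notin A$, and no edge leaves $A$. Writing $B:=V(G)\setminus A$, $a:=|A|$ and $b:=|B|=n-a$, every vertex of $A$ has out-degree at most $a-1$ and every vertex of $B$ has in-degree at most $b-1$, so $d^+_a\leq a-1$ and $d^-_b\leq b-1$.

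Now I split into cases according to the size of $a$. If $a<n/2$, then setting $i=a$ in~(i) yields $d^+_a\leq a-1<a+1$ and $d^-_{n-a}=d^-_b\leq b-1<n-a$, so both clauses of~(i) fail. If $b<n/2$, the symmetric choice $i=b$ in~(ii) gives an analogous contradiction. And if $a=b=n/2$ (so $n$ is even), then $d^+_{n/2}\leq n/2-1$ and $d^-_{n/2}\leq n/2-1$, directly violating~(iii). In every case we reach a contradiction, so $G$ must be strongly connected.

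The substantive obstacle is therefore Conjecture~\ref{nw} itself. Theorem~\ref{approxnw} already supplies an approximate version with an $\eta n$ slack in each of~(i) and~(ii); the hard part is eliminating this slack, which appears to require a delicate stability/extremal analysis. As discussed in the introduction, this is genuinely subtle because, unlike the situation in~\cite{kko}, there are many structurally distinct digraph families that come close to violating the Nash-Williams conditions, so no single extremal family organises the argument. A complete proof of Conjecture~\ref{nw3} would presumably couple Theorem~\ref{approxnw} for large~$n$ with such a stability argument that identifies and separately handles each near-extremal family, together with a direct check for small~$n$.
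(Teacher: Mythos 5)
Your reduction is exactly the paper's: the statement is a conjecture, and the paper's only "proof" is the observation that it would follow from Conjecture~\ref{nw} because conditions (i)--(iii) force strong connectivity (the paper phrases this via $|N^+(S)\cup S|>|S|$ for all $|S|\le n/2$, with (iii) handling $|S|=n/2$, which is equivalent to your no-edge-leaves-$A$ argument). Your case analysis is correct, and you rightly identify that the genuine open problem is Conjecture~\ref{nw} itself, which neither you nor the paper resolves.
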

Conjecture~\ref{nw3} would actually follow from Conjecture~\ref{nw}. To see this, it of course 
suffices to check that the conditions in Conjecture~\ref{nw3} imply strong connectivity.
This in turn is easy to verify, as the degree conditions imply that for any vertex set~$S$ 
with $|S| \le n/2$ we have $|N^-(S) \cup S|>|S|$ and $|N^+(S) \cup S| > |S|$. 
(We need~(iii) to obtain this assertion precisely for those
$S$ with $|S|=n/2$.)%
     \COMMENT{For this, consider any set $S$ with $|S| < n/2$. If $d^+_{|S|} \ge |S|+1$,
we have a vertex of outdegree at least $|S|+1$ in $S$ and so $|N^+(S)| \ge |S|+1$.
Otherwise, we have $d^-_{n-|S|} \ge n-|S|$ and so we have a vertex of indegree
$n-|S|$ outside $S$ and so $|N^+(S)| \ge |S|+1$ again. 
Now suppose $|S| = n/2$ (so $n$ is even). If $d^+ _{n/2} \geq n/2 $, then we have a vertex $x$ 
of outdegree at least $n/2$ in~$S$. So~$x$ has an outneighbour outside $S$. 
If $ d^- _{n/2} \geq n/2$, then $\bar{S}$ has a vertex $x$ with an inneighbour from $S$.
Altogether this shows that for any vertex $y$, the set $Y^+$ of vertices $z$ for which
there is an $y$-$z$ path satisfies $|Y^+| > n/2$. Similarly, we have $|Y^-| > n/2$.
This implies strong connectivity.}

It remains to check that Conjecture~\ref{nw3} would indeed characterize all 
digraph degree sequences which force a Hamilton cycle. Unless~$n$ is odd and $k=\lfloor n/2 \rfloor$,
the construction at the beginning of the section already gives non-Hamiltonian graphs 
which satisfy all the degree conditions (including (iii))%
     \COMMENT{To check~(iii), suppose that~$n$ is even. If $n-k-1\ge n/2+1$, ie if $k\le n/2-2$ then
$d^+_{n/2}=n-1$, ie (iii) holds. If $k=n/2-1$ then $n-k-2=n/2-1$ and so $d^-_{n/2}=n-k-1=n/2$,
ie (iii) holds.} 
except~(i) for $i=k$. To cover the case when~$n$ is odd and $k=\lfloor n/2 \rfloor$,
let~$G$ be the digraph obtained from two disjoint cliques~$K$ and~$K'$ of orders $\lceil n/2 \rceil$
and $\lfloor n/2 \rfloor$ by adding all edges from~$K$ to~$K'$. 
If $i=k=\lfloor n/2 \rfloor$ then~$G$ satisfies (ii) (because $d^+_{n-k}=n-1$) but not~(i).
For all other~$i$, both conditions are satisfied.%
     \COMMENT{Let $k=\lfloor n/2 \rfloor$. We have~(i) $d^+_k=k-1$ and
$d^-_{n-k}=\lceil n/2 \rceil-1=n-k-1$. Also (ii) $d^-_k=k$ and $d^+_{n-k}=n-1$.
For $k'=\lfloor n/2 \rfloor-1$ both conditions are satisfied:
(i) $d^+_{k'}=k'$ and $d^-_{n-k'}=n-1$. Also (ii) $d^-_{k'}=k'+1$.}
Finally, the example immediately preceding Conjecture~\ref{nw3} gives a graph on
an even number~$n$ of vertices which satisfies~(i) and~(ii) for all $i < n/2$ but does
not satisfy~(iii).%
     \COMMENT{Indeed, it has $d^+_{n/2}= d^-_{n/2}=n/2-1$.} 

Nash-Williams observed that Conjecture~\ref{nw} would imply Chv\'atal's theorem.
(Indeed, given an undirected graph~$G$ satisfying the degree condition in Chv\'atal's theorem,
obtain a digraph by replacing each undirected edge with a pair of directed edges, one
in each direction. This satisfies the degree condition in Conjecture~\ref{nw}.
It is also strongly connected, as it is easy to see that~$G$ must be connected.)
A disadvantage of Conjecture~\ref{nw3} is that it would not imply Chv\'atal's theorem in the same way:
consider a graph~$G$ which is obtained from $K_{n/2,n/2}$ by removing a perfect matching and
adding a spanning cycle in one of the two vertex classes. The degree sequence of this $G$
satisfies the conditions of Chv\'atal's theorem. However, the digraph obtained by doubling the edges of~$G$
does not satisfy~(iii) in Conjecture~\ref{nw3}.
 
\section{Notation and the proof of Corollary~\ref{pancyclic}}\label{sec2}
We begin this section with some notation. 
Given two vertices~$x$ and~$y$ of a digraph~$G$, we write~$xy$ for the edge directed from~$x$ to~$y$.
The order~$|G|$ of~$G$ is the number of its vertices.
We denote by~$N^+ _G (x)$ and~$N^- _G (x)$ the out- and the inneighbourhood of~$x$
and by $d^+_G(x)$ and $d^-_G(x)$ its out- and indegree.
We will write $N^+ (x)$ for example, if this is unambiguous. Given $S \subseteq V(G)$,
we write~$N^+ _G (S)$ for the union of~$N^+ _G (x)$ for all $x \in S$
and define~$N^- _G(S)$ analogously. The {\it minimum semi-degree} $\delta ^0 (G)$ of~$G$
is the minimum of its minimum outdegree~$\delta ^+ (G)$ and its minimum indegree~$\delta ^- (G)$.

%The following proposition will be useful to deduce Corollary~\ref{pancyclic} from
%Theorem~\ref{approxnw}.%
%    \TASK{We don't need this later on anymore. Is it simpler to prove what we need in
%Corollary~\ref{pancyclic} directly? If yes we can get rid of the $d^\pm_i...$ notation.}
%\begin{prop}\label{ds}
%Let $ \eta >0$. Given a digraph~$G$ on~$n$ vertices such that 
%\begin{center}
%$d^{\pm} _i \geq i+ \eta n $ or $ d^{\mp} _{n-i- \eta n} \geq n-i $ for all $i<n/2.$
%\end{center}
%Then
%\begin{center}
%$d^{\pm} _i \geq i+ \eta n/2 $ or $d^{\mp} _{n-i-\eta n/2}\geq n-i + \eta n/2$ for all $i < n/2$. 
%\end{center}
%\end{prop}
%\proof
%First note that the condition on the degree sequences of~$G$ implies that $\delta ^0 (G) \geq \eta n$.
%Thus $d^+ _i \geq i+\eta n/2 $ for all $i \leq \eta n/2$. Let $ \eta n /2 < i < n/2$.
%If $d^+ _i < i +\eta n/2$ then $d^+ _{i- \eta n/2} < i + \eta n/2$ and so $d^- _{n- i   - \eta 
%n/2} \geq n- (i- \eta n/2)$. The proof of the other condition is similar.
%\endproof

{\removelastskip\penalty55\medskip\noindent{\bf Proof of Corollary~\ref{pancyclic}. }}
Our first aim is to prove the existence of
a vertex $x \in V(G)$ such that $d^+ (x)+d^- (x) \geq n$. Such a vertex exists if there is
an index~$j$ with $d^+ _j  + d^- _{n-j} \geq n$. Indeed, at least~$n-j+1$ vertices of~$G$
have outdegree at least~$d^+_j$ and at least $j+1$ vertices have indegree at least~$d^- _{n-j}$.
Thus there will be a vertex~$x$ with $d^+(x)\ge d^+_j$ and $d^-(x)\ge d^- _{n-j}$.

To prove the existence of such an index~$j$, suppose first that there is an~$i$
with $2\le i<n/2$ and such that
$d^+_{i-1}\ge i$ but $d^+_i= i$. Then $d^-_{n-i}\ge n-i$ and so
$d^+_i+d^-_{n-i}\ge n$ as required. The same argument works if there
is an~$i$ with $2\le i<n/2$ and such that $d^-_{i-1}\ge i$ but $d^-_i= i$.
Suppose next that $d^+_1\le 1$. Then $d^-_{n-1}\ge n-1$ and so $d^+_1=1$. Thus we can take $j:=1$.
Again, the same argument works if $d^-_1\le 1$. Thus we may assume that
$d^+_{\lceil{n/2}\rceil-1}, d^-_{\lceil{n/2}\rceil-1} \ge \lceil{n/2}\rceil$.
But in this case we can take $j:=\lfloor n/2\rfloor$.

Now let~$x$ be a vertex with $d^+ (x)+d^- (x) \geq n$, set $G':=G-x$ and
$n':=|G'|$. Let $d^+_{1,G'},\dots, d^+_{n',G'}$ and $d^-_{1,G'},\dots, d^-_{n',G'}$
denote the out- and the indegree sequences of~$G'$. Given some $i \le n'$ and $s>0$, if
$d^+ _i \geq s$ then at least $n+1-i$ vertices in~$G$ have outdegree at least~$s$.
Thus at least~$n-i=n'+1-i$ vertices in~$G'$ have outdegree at least~$s-1$ and so
$d^+ _{i,G'} \geq s-1$. Thus for all $i < n/2$ the degree sequences of~$G'$
satisfy 
\begin{itemize}
\item $ d^+ _{i,G'} \geq i+ \eta n -1 $ or $ d^- _{n-i- \eta n, G'} \geq n-i-1 $,
\item $ d^- _{i,G'} \geq i+ \eta n -1 $ or $ d^+ _{n-i- \eta n,G'} \geq n-i-1 $
\end{itemize} 
and so
\begin{itemize}
\item $ d^+ _{i,G'} \geq i+ \eta n'/2 $ or $ d^- _{n'-i- \eta n'/2,G'} \geq n'-i $,
\item $ d^- _{i,G'} \geq i+ \eta n'/2 $ or $ d^+ _{n'-i- \eta n'/2,G'} \geq n'-i.$
\end{itemize} 
Hence we can apply Theorem~\ref{approxnw} with $\eta$ replaced by $\eta/2$ to obtain a 
Hamilton cycle~$C=x_1\dots x_{n'}$ in $G'$.
We now apply the same trick as in~\cite{ag} to obtain a cycle (through~$x$) in $G$ of the desired
length, $t$ say (where $2\le t\le n$): Since $d^+_G (x)+d^-_G (x) \ge n > n'$ there 
exists an $i$ such that $x_i \in N^+ _G (x)$ and $x_{i+t-2} \in N^- _G (x)$
(where we take the indices modulo~$n'$). But then $xx_i\dots x_{i+t-2}x$ is the
required cycle of length~$t$.
\endproof
Note that the proof of Corollary~\ref{pancyclic} shows that if
Conjecture~\ref{nw} holds and~$G$ is a strongly $2$-connected digraph with
\begin{itemize}
\item $d^{+}_i \ge i+2$ or  $d^{-} _{n-i-1} \geq n-i$,
\item $d^{-}_i \ge i+2$ or  $d^{+} _{n-i-1} \geq n-i$
\end{itemize}
for all $i<n/2$ then~$G$ is pancyclic. %
\COMMENT{ A complete bipartite digraph with vertex classes of size
$n/2$ has $d^+_{n/2-1}=n/2<(n/2-1)+2$ and $d^-_{n-(n/2-1)-1}=d^-_{n/2}=n/2<n-(n/2-1)$.
So it just violates the condition for $i=n/2-1$ and doesn't contain odd cycles.
Is the condition best possible for each~$i$?}

The next result implies that we cannot replace pancyclicity with vertex-pancyclicity
in Corollary~\ref{pancyclic}.
\begin{prop}\label{vertexpan}
Given any $k \geq 3$ there are $\eta=\eta(k)>0$ and $n_0=n_0(k)$ such that for every
$n \geq n_0$ there exists a digraph~$G$ on~$n$ vertices with $d^{+} _i,d^-_i \geq i+ \eta n $
for all $i<n/2$, but such that some vertex of~$G$
does not lie on a cycle of length less than~$k$.
\end{prop}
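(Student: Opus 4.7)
The plan is to exhibit an explicit digraph $G$ on $n$ vertices satisfying the degree condition in which some distinguished vertex $v$ lies on no short cycle. Fix $k\geq 3$, set $\eta:=1/(2k)$, put $m:=\lfloor (n-1)/(k-1)\rfloor$, and partition $V(G)\setminus\{v\}$ into $k-1$ layers $L_1,\dots,L_{k-1}$ of size $m$ (adjusting one layer to absorb the leftover vertices). Define the edges of $G$ by: (a) each $L_i$ induces a complete digraph; (b) between layers we include all forward edges $L_i\to L_{i+1}$ for $1\leq i\leq k-2$, together with all backward edges $L_i\to L_j$ for every $j<i$; (c) we place all edges $v\to L_1$ and all edges $L_{k-1}\to v$, and no other edges incident to~$v$.

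The key structural observation is that every edge $xy$ of $G$ with $x\in L_i$ and $y\in L_j$ satisfies $j\leq i+1$. Hence along any directed path the layer-index can advance by at most one per step. Any cycle through $v$ must have the form $v\to a\to\cdots\to b\to v$ with $a\in L_1$ and $b\in L_{k-1}$, so its interior traversing from $a$ to $b$ uses at least $k-2$ edges. Therefore every cycle through $v$ has length at least $1+(k-2)+1=k$, and in particular $v$ lies on no cycle of length less than~$k$.

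It remains to verify the degree condition. A direct count gives $d^+(v)=m$; for $u\in L_j$ with $1\leq j\leq k-2$ one has $d^+(u)=(j+1)m-1$ (namely $m-1$ internal, $m$ forward and $(j-1)m$ backward); and $d^+(u)=(k-1)m=n-1$ for $u\in L_{k-1}$. After sorting, the first term equals $m$, the block $(j-1)m+2\leq i\leq jm+1$ equals $(j+1)m-1$ for $1\leq j\leq k-2$, and the final block (positions $i\geq (k-2)m+2$) equals $n-1$. In each block the required inequality $d^+_i\geq i+\eta n$ reduces either to $m\geq 2+\eta n$ or to $n-1\geq n/2+\eta n$, both of which hold for large $n$ since $m\sim n/(k-1)$ while $\eta n=n/(2k)$. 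The indegree condition is handled identically (with $L_j$ replaced by $L_{k-j}$ throughout), using that backward edges of $G$ are inward edges for earlier layers.

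The main point, and what any construction of this type must balance, is the tension between (i) blocking short cycles through $v$, which forces the restriction that the layer-index advances by at most one per edge, and (ii) meeting the Chv\'atal-type lower bound on the sorted degree sequences, which grows linearly in $i$. Including all backward edges $L_i\to L_j$ for $j<i$ resolves this tension: such edges never shorten a directed path from $L_1$ to $L_{k-1}$ and so preserve the cycle bound, while they substantially inflate the outdegrees of later layers and the indegrees of earlier layers, producing precisely the step-by-step growth of the sorted degree sequences required by Corollary~\ref{pancyclic}.
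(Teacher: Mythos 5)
Your construction is correct, and the verification goes through: the layer index can increase by at most one along any edge, the only way back into $v$ is from $L_{k-1}$, and the sorted degree sequences are staircases with steps of size $m\sim n/(k-1)$, which comfortably dominate $i+\eta n$ for $\eta=1/(2k)$ and large $n$. The overall strategy is the same as the paper's -- a layered digraph in which the distinguished vertex can only be re-entered after a path traversing all the layers -- but the construction itself is genuinely different. The paper takes the layers $V_1,\dots,V_{k-2}$ to be \emph{independent sets} of geometrically growing sizes $|V_i|=3^i\lceil\eta n\rceil$ (with $\eta=1/(k3^k)$), feeding into one dominant complete digraph $K$ that sends edges to everything; the geometric growth is exactly what makes the outdegree staircase climb at the required rate, while the indegree condition is free because every vertex has indegree at least $|K|\ge 2n/3$. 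Your version instead uses $k-1$ equal-sized complete digraphs with all ``downward'' edges $L_i\to L_j$ ($j<i$) present, which makes the in- and outdegree sequences symmetric and lets you verify a single staircase inequality $m\ge 2+\eta n$ block by block. Your approach buys a cleaner, symmetric bookkeeping and a better (larger) value of $\eta$, namely $1/(2k)$ versus $1/(k3^k)$; the paper's approach buys a near-trivial indegree check at the cost of having to tune the layer sizes geometrically. Both are valid; the only point worth being slightly more explicit about in your write-up is that absorbing the leftover $n-1-(k-1)m$ vertices into one layer perturbs the degrees by at most $k-2$, which is swallowed by the slack in $m\ge 2+\eta n$.
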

\proof
Let $\eta :=1/(k3^k)$ and suppose that~$n$ is sufficiently large. Let~$G$ be the digraph obtained
from the disjoint union of~$k-2$ independent sets $V_1, \dots , V_{k-2}$
with $|V_i|=3^i\lceil \eta n\rceil$ and a complete digraph~$K$ on $n-1-|V_1\cup \dots\cup V_{k-2}|$ vertices
as follows. Add a new vertex~$x$ which sends an edge to all vertices in~$V_1$ and receives
an edge from all vertices in~$K$. Add all possible edges from~$V_i$ to~$V_{i+1}$
(but no edges from~$V_{i+1}$ to~$V_i$) for each $i \leq k-3$.
Finally, add all possible edges going from vertices in~$K$ to other vertices 
and add all edges from $V_{k-2}$ to $K$.
Then $d^- _i \geq |K|\ge 2n/3$ and
$d^{+} _i \geq i+ \eta n $ for all $i<n/2$ with room to spare.
However, if~$C$ is a cycle containing $x$ then the inneighbour of~$x$ on~$C$ must lie in~$K$.
But the shortest path from~$x$ to~$K$ has length~$k-1$ and so $|C|\geq k$, as required.
\endproof

\section{Degree sequences for Hamilton cycles in oriented graphs} \label{orient}

In Section~\ref{sec1} we mentioned Ghouila-Houri's  theorem which gives a bound on the
minimum semi-degree of a digraph~$G$ guaranteeing a Hamilton cycle. A natural question raised
by Thomassen~\cite{thomassen_79_long_cycles_constraints} is that of determining the minimum
semi-degree which ensures a Hamilton cycle
in an oriented graph. H\"aggkvist~\cite{HaggkvistHamilton} conjectured that every oriented graph~$G$ of order
$n\geq 3$ with $\delta ^0 (G) \geq (3n-4)/8$ contains a 
Hamilton cycle. The bound on the minimum semi-degree would be best possible.
The first two authors together with Keevash~\cite{kko} confirmed this conjecture
for sufficiently large oriented graphs. 

P\'osa's theorem implies the existence of a Hamilton cycle in a graph~$G$ even if~$G$
contains a significant number of vertices of degree much less than $n/2$, i.e.~of degree
much less than the minimum degree required to force a Hamilton cycle. In particular,
P\'osa's theorem is much stronger than Dirac's theorem. In the same sense, 
Conjecture~\ref{nw2} would be much stronger than Ghouila-Houri's theorem.
The following proposition implies that we cannot strengthen H\"aggkvist's conjecture in this way:
there are non-Hamiltonian oriented graphs which contain just a bounded number of vertices whose
semi-degree is (only slightly) smaller than $3n/8$.
To state this proposition we need to introduce the notion of dominating sequences: 
Given sequences $x_1,\dots,x_n$ and $y_1, \dots, y_n$ of numbers we say
that $y_1, \dots, y_n$ \emph{dominates} $x_1,\dots,x_n$  if $x_i \leq y_i$ for all $1\le i \leq n$.

\begin{prop}\label{orientedprop}
For every $0 < \alpha <3/8$, there is an integer $c=c(\alpha)$ and
infinitely many oriented graphs~$G$ whose in- and outdegree sequences both
dominate
$$\underbrace{\alpha |G|, \dots, \alpha|G|}_{c \text{ times}},3|G|/8, \dots , 3 |G|/8$$
but such that $G$ does not contain a Hamilton cycle.
\end{prop}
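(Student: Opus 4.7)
\proof
The plan is to construct, for each $\alpha<3/8$, non-Hamiltonian oriented graphs in which only a bounded number of vertices fall short of in- and outdegree~$3n/8$. The strategy is to start from an extremal example for the (tight) minimum semi-degree bound in H\"aggkvist's conjecture~\cite{HaggkvistHamilton,kko}, where $\delta^0$ is essentially $3n/8$ and there is no Hamilton cycle, and then to localise the ``shortfall'' from~$3n/8$ to a bounded exceptional set of vertices.

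First I would identify, for infinitely many~$n$, a non-Hamiltonian oriented graph~$G_0$ on~$n$ vertices and a constant-size exceptional set~$S\subseteq V(G_0)$ with $|S|\le c=c(\alpha)$ such that every vertex outside~$S$ has in- and outdegree at least $3n/8+c$. The construction would partition $V(G_0)\setminus S$ into a few classes of carefully chosen sizes, orient all crossing edges between classes in prescribed directions, and place near-regular tournaments inside each class, in such a way that the obstruction to a Hamilton cycle is witnessed by the interaction of~$S$ with these classes.

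Next, since the proposition allows the vertices of~$S$ to have semi-degree as low as~$\alpha n$, I would delete edges incident to~$S$ until every vertex of~$S$ has in- and outdegree between~$\alpha n$ and~$3n/8$. Deleting edges cannot create a Hamilton cycle; and since each vertex outside~$S$ loses at most~$|S|\le c$ in-edges and at most~$c$ outedges in this process, the $+c$ safety margin from the first step keeps every vertex outside~$S$ at in- and outdegree at least~$3n/8$. Listing the in- and outdegree sequences of the resulting graph in increasing order, the first at most~$c$ terms are then at least~$\alpha n$ and the remaining terms are at least~$3n/8$, so both sequences dominate the target sequence.

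The main obstacle lies in the first step: engineering a non-Hamiltonian oriented graph in which all but boundedly many vertices attain semi-degree at least~$3n/8$. A blow-up of a transitive tournament, or a similar naive partition construction, produces linearly many vertices with semi-degree bounded away from~$3n/8$, because the ``source'' and ``sink'' classes in such a blow-up cannot both be large while leaving room for the required semi-degree lower bound. The pattern oriented graph and the class sizes must therefore be chosen delicately so that the obstruction to Hamiltonicity is localised at a bounded subset of~$V(G_0)$ rather than being spread over a positive fraction of the vertices.

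\endproof
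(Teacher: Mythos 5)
There is a genuine gap: your argument is a plan whose only nontrivial step is left unexecuted. You correctly identify that the whole difficulty is ``engineering a non-Hamiltonian oriented graph in which all but boundedly many vertices attain semi-degree at least $3n/8$,'' you correctly observe that naive blow-ups fail because they leave linearly many deficient vertices, and then you stop -- but that construction \emph{is} the proposition. Nothing in your first step tells us what the classes are, how the crossing edges are oriented, why the result is non-Hamiltonian, or why the deficiency can be confined to a set of bounded size; asserting that ``the class sizes must be chosen delicately'' is a restatement of the problem, not a solution to it. For comparison, the paper uses five classes $A,B,C,D,E$ of sizes $n/4$, $n/8$, $n/8-1$, $n/4+1$, $n/4$, with $E$ independent and wired so that on any Hamilton cycle each vertex of $E$ has its in-neighbour in $C\cup D$ and its out-neighbour in $A\cup B$; every $A\cup B\to C\cup D$ edge meets $B\cup C$, and $|B|+|C|=n/4-1<|E|$, so there is not even a $1$-factor. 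The deficient vertices are then handled by an \emph{asymmetric} tournament on $A=A'\cup A''$ with $|A''|=c$: each vertex of $A'$ beats slightly more than half of $A$, pushing its outdegree up to $3n/8$, while the vertices of $A''$ absorb the corresponding deficit and end at outdegree $\ge\alpha n$; the arithmetic closes precisely because $c=4t$ with $3-1/t>8\alpha$. A symmetric treatment of $D$ handles the indegrees.

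Your second step is also misconceived. If your step one really produced a bounded set $S$ with every vertex outside $S$ at semi-degree $\ge 3n/8+c$, then either the vertices of $S$ are already deficient (in which case there is nothing to delete and the domination claim follows without your deletion step), or they are not, in which case $\delta^0(G_0)\ge 3n/8$ and for large $n$ the graph is Hamiltonian by the H\"aggkvist--Keevash--K\"uhn--Osthus theorem, contradicting non-Hamiltonicity. More fundamentally, the ``$+c$ safety margin then delete at $S$'' picture does not reflect how degree deficits behave in oriented graphs: inside a tournament on a class, raising some vertices' outdegree necessarily lowers others' -- it is a zero-sum reallocation, which is exactly the mechanism the paper exploits and which cannot be simulated by edge deletion after the fact.
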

\begin{proof}
Define $c:=4t$ where $t \in \mathbb N$ is chosen such that $3-1/t>8 \alpha$.
Let $n$ be sufficiently large and such that~$8t$ divides~$n$ and define vertex sets
$A,B,C,D$ and $E$ of sizes $n/4, n/8,n/8-1,n/4+1$ and $n/4$ respectively.

Let~$G$ be the oriented graph obtained from the disjoint union of $A,B,C,D$ and~$E$
by defining the following edges: $G$ contains all possible edges from $A$ to~$B$, $B$ to~$C$, $C$ to~$D$,
$A$ to~$C$, $B$ to~$D$ and $D$ to~$A$. $E$ sends out all possible edges to~$A$ and~$B$ and receives
all possible edges from~$C$ and~$D$. $B$ and $C$ both induce tournaments that are as regular as possible
(see Figure~2).
\begin{figure}[htb!]
\begin{center}%\footnotesize
\psfrag{1}[][]{$A$}
\psfrag{2}[][]{ $B$}
\psfrag{3}[][]{ $C$}
\psfrag{4}[][]{ $D$}
\psfrag{5}[][]{ $E$}
\psfrag{a}[][]{ $n/4$}
\psfrag{b}[][]{ $n/8$}
\psfrag{c}[][]{ $n/8-1$}
\psfrag{d}[][]{ $n/4+1$}
\psfrag{e}[][]{$n/4$}
\psfrag{x}[][]{$A'$}
\psfrag{y}[][]{$A''$}
\psfrag{w}[][]{$D'$}
\psfrag{z}[][]{$D''$}
\includegraphics[width=0.9\columnwidth]{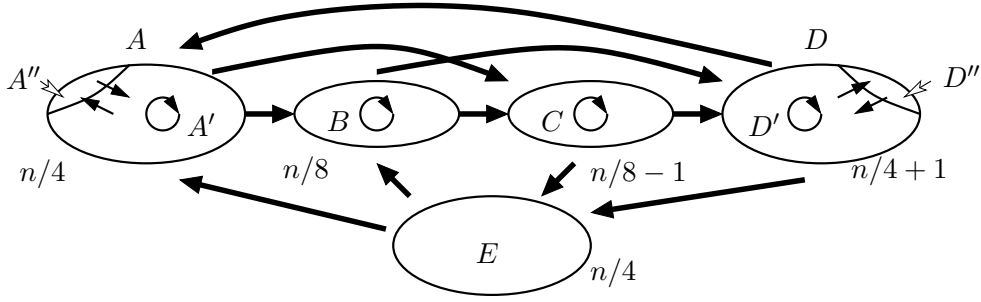} 
\caption{The oriented graph~$G$ in Proposition~\ref{orientedprop}}
\end{center}
\end{figure}
So certainly $d^+ _G (x), d^- _G (x) \geq 3n/8$ for all $x \in B \cup C \cup E$.
Furthermore, currently, $d^+ _G (a)=n/4-1$, $d^- _G (a) =n/2 +1$, $d^+ _G (d)=n/2$ and 
$d^- _G (d) =n/4-1$ for all $a \in A$ and all $d \in D$.

Partition $A$ into $A'$ and $A''$ where $|A''|=c$ and thus $|A'|=n/4-c$. Write
$A'=:\{x_1,x_2, \dots, x_{n/8-c/2},y_1,y_2, \dots, y_{n/8-c/2} \}$ and
$A''=: \{z_1,\dots, z_{2t}, w_1 , \dots , w_{2t}\}$.
Let~$A'$ induce a tournament that is as regular as possible. In particular, every vertex
in~$A'$ sends out at least $n/8-c/2-1$  edges to other vertices in~$A'$. We define the edges between~$A'$
and~$A''$ as follows: Add the edges $x_i z_j, y_iw_j$ to~$G$ for all $1 \leq i \leq n/8-c/2$
and $1\leq j \leq 2t$. Note that we can partition both $\{x_1, \dots , x_{n/8-c/2}\}$ and
$\{y_1, \dots , y_{n/8-c/2}\}$ into~$t$ sets of size $s:=n/(2c)-2$.
For each $0\leq i\leq t-1$ add all possible edges from 
$\{x_{si+1}, \dots, x_{s(i+1)}\}$ to $\{w_{2i+1}, w_{2i+2}\}$ and from
$\{y_{si+1}, \dots, y_{s(i+1)}\}$ to $\{z_{2i+1}, z_{2i+2}\}$.
If $a' \in A'$ and $a'' \in A''$ are such that the edge~$a'a''$ has not been included into~$G$ so far
then add the edge~$a''a'$ to~$G$.
Thus, $d^+ _G (a') \geq (n/4-1)+(n/8-c/2-1)+c/2+2=3n/8$ for all $a' \in A'$ and
$$d^+ _G (a'') \geq (n/4-1)+(n/8-c/2-s)=3n/8 -c/2-n/(2c)+1 \geq \alpha n$$ for all $a'' \in A''$. 

Partitioning $D$ into~$D'$ and~$D''$ (where $|D''|=c$) and defining edges inside~$D$ in a
similar fashion to those inside~$A$, we can ensure that $d^- _G (d') \geq 3n/8$ for all
$d' \in D'$ and $d^- _G (d'') \geq \alpha n$ for all $d'' \in D''$.
So indeed~$G$ has the desired degree sequences.

$E$ is an independent set, so if~$G$ contains a Hamilton cycle~$H$ then the inneighbour of
each vertex in~$E$ on~$H$ must lie in $C \cup D$ while its outneighbour lies in $A \cup B$. So~$H$
contains at least $|E|=n/4$ disjoint edges going from $A \cup B$ to $C \cup D$. However,
all such edges in~$G$ have at least one endvertex in~$B\cup C$. So there are at most
$|B|+|C|=n/4-1<|E|$ such disjoint edges in~$G$. Thus~$G$ does not contain a Hamilton
cycle (in fact, $G$ does not contain a $1$-factor).
\end{proof}

\section{The Diregularity lemma and other tools}\label{sec3}
In the proof of Theorem~\ref{approxnw} we will use the directed version of
Szemer\'edi's Regularity lemma. Before we can state it we need some more definitions. 
The {\it density} of an undirected bipartite graph $G=(A,B)$ with vertex classes~$A$ and~$B$ is defined to be 
$$d_G (A,B):=\frac{e_G(A,B)}{|A||B|}.$$ 
We will write $d(A,B)$ if this is unambiguous. Given any $\varepsilon >0$ we say that~$G$
is {\it $\varepsilon$-regular} if for all $X\subseteq A$ and $Y \subseteq B$ with $|X|>\varepsilon 
|A|$ and $|Y|> \varepsilon |B|$ we have that $|d(X,Y)-d(A,B)|<\varepsilon$.

Given disjoint vertex sets~$A$ and~$B$ in a digraph~$G$, we write $(A,B)_G$ for the
oriented bipartite subgraph of~$G$
whose vertex classes are~$A$ and~$B$ and whose edges are all the edges from~$A$ to~$B$ in~$G$.
We say $(A,B)_G$ is {\it $\varepsilon$-regular and has density~$d$} if the
underlying bipartite graph of $(A,B)_G$ is $\varepsilon$-regular and has density~$d$.
(Note that the ordering of the pair $(A,B)$ is important here.)

The Diregularity lemma is a variant of the Regularity lemma for digraphs due to Alon
and Shapira~\cite{alon}. Its proof is similar to the undirected version.
We will use the degree form of the Diregularity lemma which is derived
from the standard version in the same manner as the undirected degree form
(see e.g.~the survey~\cite{survey} for a sketch of the undirected version).

\begin{lemma}[Degree form of the Diregularity lemma]\label{dilemma}
For every $\varepsilon\in (0,1)$ and every integer~$M'$ there are integers~$M$ and~$n_0$
such that if~$G$ is a digraph on $n\ge n_0$ vertices and
$d\in[0,1]$ is any real number, then there is a partition of the vertex set of~$G$ into
$V_0,V_1,\ldots,V_k$ and a spanning subdigraph~$G'$ of~$G$ such that the following holds:
\begin{itemize}
\item $M'\le k\leq M$,
\item $|V_0|\leq \varepsilon n$,
\item $|V_1|=\dots=|V_k|=:m$,
\item $d^+_{G'}(x)>d^+_G(x)-(d+\varepsilon)n$ for all vertices $x\in V(G)$,
\item $d^-_{G'}(x)>d^-_G(x)-(d+\varepsilon)n$ for all vertices $x\in V(G)$,
\item for all $i=1,\dots,k$ the digraph $G'[V_i]$ is empty,	
\item for all $1\leq i,j\leq k$ with $i\neq j$ the pair $(V_i,V_j)_{G'}$ is $\varepsilon$-regular
and has density either~$0$ or density at least~$d$.
\end{itemize}
\end{lemma}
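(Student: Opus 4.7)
The plan is to derive this degree form from the standard (non-degree) Diregularity lemma of Alon and Shapira by the same clean-up procedure that produces the degree form of Szemer\'edi's Regularity lemma from its classical version. The proof is essentially routine bookkeeping once the auxiliary parameters are chosen appropriately.

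First I would fix auxiliary constants $\varepsilon' \ll \varepsilon$ (say $\varepsilon':=(\varepsilon/20)^2$) and $M'':=\max\{M',\lceil 4/\varepsilon\rceil\}$, and apply the Alon--Shapira Diregularity lemma with parameters $\varepsilon'$ and $M''$ to $G$. This produces a partition $V_0', V_1', \ldots, V_{k_0}'$ with $M''\le k_0\le M$, $|V_0'|\le \varepsilon' n$, equal-sized clusters of some common size $m$, and at most $\varepsilon' k_0^2$ of the ordered pairs $(V_i',V_j')$ ($i\neq j$) failing to be $\varepsilon'$-regular.

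The crucial next step is to absorb the ``bad'' clusters into the exceptional set. Call $V_i'$ \emph{bad} if it is involved, as first or second coordinate, in more than $\sqrt{\varepsilon'}\,k_0$ irregular ordered pairs. Since each irregular pair contributes two such incidences, at most $2\sqrt{\varepsilon'}\,k_0$ clusters are bad; I move all of them into $V_0'$ to obtain a new exceptional set $V_0$ with $|V_0|\le 3\sqrt{\varepsilon'}\,n$, and relabel the surviving clusters as $V_1,\dots,V_k$. I then define $G'$ by deleting from $G$ every edge that either (a) lies inside some $V_i$ with $i\ge 1$, (b) sits in an ordered pair $(V_i,V_j)$ that is still not $\varepsilon'$-regular, or (c) sits in an ordered pair $(V_i,V_j)$ whose density in $G$ is less than $d$; all other edges, including every edge incident to $V_0$, are kept. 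By construction $G'[V_i]$ is empty for $i\ge 1$, and each surviving pair $(V_i,V_j)_{G'}$ is $\varepsilon$-regular with density either $0$ or at least $d$.

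The degree bounds are then verified vertex by vertex. For $x\in V_0$ no edges incident to $x$ are removed, so $d^\pm_{G'}(x)=d^\pm_G(x)$ trivially. For $x$ in a surviving cluster $V_i$, the deleted out-edges split as (edges inside $V_i$) $+$ (edges into $V_0$) $+$ (edges in irregular pairs through $V_i$) $+$ (edges in low-density pairs through $V_i$), which is bounded by $m+|V_0|+\sqrt{\varepsilon'}\,km+d\,km$. Since $km\le n$ and $m\le n/k_0$, this is at most $(d+\varepsilon)n$ provided $1/M''$ and $\sqrt{\varepsilon'}$ are sufficiently small in terms of $\varepsilon$; the in-degree estimate is completely symmetric. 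The only point requiring any care --- and this is exactly what the bad-cluster move into $V_0$ secures --- is that the contribution of irregular pairs must be controlled per vertex, not merely on average, since otherwise the irregular pairs could concentrate at a single $V_i$ and cost a vertex there up to $km$ edges.
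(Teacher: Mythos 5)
The paper itself gives no proof of this lemma: it merely records that the degree form is derived from the Alon--Shapira Diregularity lemma ``in the same manner as the undirected degree form'' and points to~\cite{survey} for a sketch. Your proposal is an attempt at exactly that standard derivation, and your handling of the irregular pairs --- moving every cluster that lies in more than $\sqrt{\varepsilon'}\,k_0$ irregular ordered pairs into the exceptional set, so that the per-vertex loss from irregular pairs is at most $\sqrt{\varepsilon'}\,k_0m\le\sqrt{\varepsilon'}\,n$ --- is correct and is indeed the standard device (modulo the cosmetic point that you should start with $M''\ge 2M'$ so that the lower bound $k\ge M'$ survives the deletion of up to $2\sqrt{\varepsilon'}\,k_0$ clusters).

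The genuine gap is in your accounting for the low-density pairs. You bound the out-edges a vertex $x\in V_i$ loses to pairs $(V_i,V_j)$ of density less than $d$ by $d\,km\le dn$, i.e.\ by $dm$ per pair. But ``density less than $d$'' controls these edges only \emph{on average} over $V_i$: a single vertex can have all $m$ of its out-neighbours inside $V_j$ while $d_G(V_i,V_j)=1/m$, and such a pair is easily $\varepsilon'$-regular, so it is not caught by your bad-cluster step. Repeating this over $k_0/2$ clusters $V_j$ yields a vertex that loses about $n/2$ out-edges in your deletion step (c), violating $d^+_{G'}(x)>d^+_G(x)-(d+\varepsilon)n$ whenever $d+\varepsilon<1/2$. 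This is precisely the ``per vertex, not merely on average'' trap you flag for the irregular pairs, but it cannot be repaired by discarding whole clusters, since every cluster may legitimately lie in many low-density pairs. The standard repair invokes the $\varepsilon'$-regularity of each low-density pair: at most $\varepsilon'm$ vertices of $V_i$ have more than $(d+\varepsilon')m$ out-neighbours in a given $V_j$, so the total excess over all such pairs at $V_i$ is at most $\varepsilon'k_0m^2\le\varepsilon'nm$, whence at most $\sqrt{\varepsilon'}\,m$ vertices of $V_i$ accumulate excess exceeding $\sqrt{\varepsilon'}\,n$. These atypical vertices (for out-pairs and, symmetrically, for in-pairs) must also be moved into $V_0$ before the deletions are performed, removing the same number from each cluster to preserve equal sizes; this slightly degrades the regularity and density parameters, which the slack between $\varepsilon'$ and $\varepsilon$ and between $d$ and $d+\varepsilon$ absorbs. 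Without this extra step the stated degree bounds do not follow from your argument.
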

We call $V_1, \dots, V_k$ {\it clusters}, $V_0$ the {\it exceptional set} and the
vertices in~$V_0$ {\it exceptional vertices}. We refer to~$G'$ as the {\it pure digraph}.
The last condition of the lemma says that all pairs of clusters are $\eps$-regular in both
directions (but possibly with different densities).
The {\it reduced digraph~$R$ of~$G$ with parameters $\varepsilon$, $d$ and~$M'$} is the digraph whose 
vertices are $V_1, \dots , V_k$ and in which $V_i V_j$ is an edge precisely when $(V_i,V_j)_{G'}$
is $\varepsilon$-regular and has density at least~$d$.

Given $0<\nu\le \tau<1$, we call a digraph~$G$ a \emph{$(\nu,\tau)$-outexpander} if
$|N^+(S)|\ge |S|+\nu |G|$ for all $S\subseteq V(G)$ with $\tau |G|<|S|< (1-\tau)|G|$.
The main tool in the proof of Theorem~\ref{approxnw} is the following result from~\cite{kko}.
\begin{lemma}\label{cyclelemma}
Let $M',n_0$ be positive integers and let $\varepsilon,d,\eta,\nu,\tau$ be positive constants such that
$1/n_0\ll 1/M'\ll  \varepsilon \ll d\ll \nu\le \tau\ll \eta< 1$. Let~$G$ be an oriented graph on $n\ge n_0$
vertices such that $\delta^0(G)\ge 2\eta n$. Let~$R$ be the reduced digraph of $G$ with
parameters $\varepsilon$, $d$ and~$M'$. Suppose that there exists a spanning oriented subgraph~$R^*$
of~$R$ with $\delta^0(R^*)\ge \eta |R^*|$ which is a $(\nu,\tau)$-outexpander.
Then $G$ contains a Hamilton cycle.
\end{lemma}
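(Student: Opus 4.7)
The plan is to use the Regularity method in a standard absorbing-type framework. The two key inputs are (i) the expander $R^*$, which combined with its positive minimum semi-degree contains a Hamilton cycle, and (ii) the pure digraph $G'$ underlying $R$, in which every edge $V_iV_j$ of $R$ corresponds to an $\eps$-regular pair of density at least~$d$. The strategy has four parts: find a Hamilton cycle $C_R=V_{i_1}\dots V_{i_k}V_{i_1}$ in $R^*$; convert each consecutive pair $(V_{i_j},V_{i_{j+1}})_{G'}$ into a \emph{super-regular} pair by discarding a few bad vertices from each cluster into an enlarged exceptional set $V_0'$; absorb the vertices of $V_0'$ (which includes the original exceptional set $V_0$) via short paths that cut across edges of $C_R$; and finally invoke the Blow-up Lemma along $C_R$ to turn this skeleton into a Hamilton cycle in~$G$.

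In Step 1, a Hamilton cycle in $R^*$ is produced by a rotation--extension argument that exploits the $(\nu,\tau)$-outexpansion together with $\delta^0(R^*)\ge \eta|R^*|$: outexpansion allows rotating a longest path to create many endpoint choices, and the positive semi-degree plus expansion lets one close the rotated path into a longer one, eventually into a Hamilton cycle. In Step 2, for each pair $(V_{i_j},V_{i_{j+1}})_{G'}$ one removes at most $\eps m$ vertices from each cluster so that every surviving vertex in $V_{i_j}$ has at least $(d-\eps)m$ outneighbours in $V_{i_{j+1}}$, and symmetrically for the inneighbours from $V_{i_{j-1}}$; the resulting pairs are $2\eps$-regular and super-regular. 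In Step 3, every vertex $v\in V_0'$ satisfies $d^+_G(v),d^-_G(v)\ge 2\eta n$ and $|V_0'|\ll \eta n$, so $v$ has at least, say, $\eta n$ inneighbours in some cluster and $\eta n$ outneighbours in the next cluster along $C_R$ (in many choices of the cluster pair). Greedily reserve, for each such $v$, a triple $u,v,w$ with $u\in V_{i_j}$, $w\in V_{i_{j+1}}$ and $uv,vw\in E(G)$, spreading the choices of $j$ evenly around $C_R$ so that no cluster loses too many vertices, and finally rebalance cluster sizes by transferring a few vertices between neighbouring clusters along $C_R$ so that each modified super-regular pair has equal-sized sides. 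In Step 4, apply the Blow-up Lemma inside every super-regular pair to find a Hamilton path with prescribed entry and exit vertices; concatenating these paths with the reserved absorbing triples yields a Hamilton cycle in $G$.

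The main obstacle is the bookkeeping in Step 3: the absorption of $V_0'$ must preserve both the size balance required by the Blow-up Lemma and the super-regularity of every consecutive pair along $C_R$. The natural route is to process $V_0'$ greedily, never removing more than $\sqrt{\eps}\,m$ vertices from any one cluster, and then perform a small rebalancing phase that shuffles a few vertices between adjacent clusters of $C_R$ along short augmenting paths in $G'$. Verifying that super-regularity survives all these modifications (and that the Blow-up Lemma still applies at the end) is the technical heart of the argument, though it is by now standard in regularity-based Hamiltonicity proofs.
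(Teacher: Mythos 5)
This lemma is not proved in the paper at all: it is quoted verbatim from~\cite{kko}, so the comparison below is with the argument given there. Your overall regularity-plus-blow-up framework is the right family of techniques, but two of your steps have genuine gaps, and they are exactly the points where~\cite{kko} had to introduce new ideas.

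First, Step~1 does not work as stated. P\'osa-type rotation--extension fails for directed graphs: reversing the segment $v_{i+1}\dots v_\ell$ of a path requires the reversed edges to be present, which an oriented graph does not supply, and no standard digraph substitute gives a Hamilton cycle in an outexpander with small linear semi-degree. Indeed, a Hamilton cycle in $R^*$ is essentially the same problem as Theorem~\ref{expanderthm}, which this paper proves \emph{using} Lemma~\ref{cyclelemma}, so assuming it here is close to circular. The proof in~\cite{kko} avoids this entirely: the $(\nu,\tau)$-outexpansion together with $\delta^0(R^*)\ge\eta|R^*|$ yields, via a defect version of Hall's theorem applied to the bipartite out-neighbourhood graph, only a \emph{1-factor} of $R^*$ (a spanning union of disjoint cycles), and the cycles are then linked together by ``shifted walks''.

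Second, and more seriously, the key claim in Step~3 is false in general. From $\delta^0(G)\ge 2\eta n$ you know that an exceptional vertex $v$ has $\Omega(\eta k)$ clusters containing many inneighbours and $\Omega(\eta k)$ clusters containing many outneighbours, but with $\eta$ small there is no reason why any cluster of the first kind should be immediately followed on $C_R$ by a cluster of the second kind; the two families can be completely misaligned with the cyclic order. This is precisely the obstacle that the shifted-walk technique of~\cite{kko} (and~\cite{approxham}) was designed to overcome: one enters $v$ from \emph{any} cluster $T$ with many inneighbours of $v$, leaves $v$ into \emph{any} cluster $S$ with many outneighbours, and then returns to the 1-factor structure by a walk which alternates between edges of $R^*$ and backward steps along the 1-factor, using the outexpansion of $R^*$ to guarantee that such walks exist, are short, and can be chosen so that no cluster is visited too often. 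Without this (or an equivalent) mechanism, your absorption of $V_0'$ cannot be completed, and the rest of your outline (super-regularization, rebalancing, Blow-up Lemma) --- which is otherwise standard and essentially matches the endgame of~\cite{kko} --- never gets off the ground.
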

Here we write $0<a_1 \ll a_2 \ll a_3 \le 1$ to mean that we can choose the constants
$a_1,a_2,a_3$ from right to left. More
precisely, there are increasing functions $f$ and $g$ such that, given
$a_3$, whenever we choose some $a_2 \leq f(a_3)$ and $a_1 \leq g(a_2)$, all
calculations needed in the proof of Lemma~\ref{cyclelemma} are valid.
 
Our next aim is to show that any digraph~$G$ as in Theorem~\ref{approxnw} is an
outexpander. In fact, we will show
that even the `robust outneighbourhood' of any set~$S\subseteq V(G)$ of reasonable size is
significantly larger than~$S$. More precisely, let $0<\nu\le  \tau<1$. Given any digraph~$G$ and
$S\subseteq V(G)$, the \emph{$\nu$-robust outneighbourhood~$RN^+_{\nu,G}(S)$ of~$S$}
is the set of all those vertices~$x$ of~$G$ which have at least $\nu |G|$ inneighbours
in~$S$. $G$ is called a \emph{robust $(\nu,\tau)$-outexpander}
if $|RN^+_{\nu,G}(S)|\ge |S|+\nu |G|$ for all
$S\subseteq V(G)$ with $\tau |G|< |S|< (1-\tau)|G|$.

\begin{lemma}\label{robustG}
Let $n_0$ be a positive integer and $\tau,\eta$ be positive constants such that
$1/n_0\ll\tau \ll \eta < 1$. Let $G$ be a digraph on $n\ge n_0$ vertices with
\begin{itemize}
\item[(i)] $d^{+} _i\geq i + \eta n$  or $d^{-} _{n-i-\eta n} \geq n-i$,
\item[(ii)] $d^{-} _i\geq i + \eta n$  or $d^{+} _{n-i-\eta n} \geq n-i$
\end{itemize}
for all $i <n/2$. Then $\delta^0(G)\ge \eta n$ and~$G$ is a robust $(\tau^2,\tau)$-outexpander.
\end{lemma}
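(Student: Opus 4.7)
The proof splits into verifying the minimum semi-degree bound and the robust outexpansion property.

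For $\delta^0(G) \ge \eta n$, I will prove $\delta^+(G) \ge \eta n$; a symmetric argument via condition~(ii) at $i = 1$ gives $\delta^-(G) \ge \eta n$. Suppose for contradiction that $d^+(v) < \eta n$ for some $v$. Then $d^+_1 \le d^+(v) < 1 + \eta n$, so the first option of condition~(i) at $i=1$ fails, forcing $d^-_{n-1-\eta n} \ge n-1$ and hence at least $\eta n + 2$ vertices with indegree equal to $n-1$. Any such vertex $u \ne v$ has all $n-1$ other vertices as inneighbours; in particular $v \in N^-(u)$, i.e., $u \in N^+(v)$. But $|N^+(v)| = d^+(v) \le \eta n - 1$, so at most $\eta n$ vertices in total (including $v$ itself) have indegree $n-1$, contradicting the lower bound $\eta n + 2$.

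For the robust outexpansion, fix $S \subseteq V(G)$ with $s := |S|$ in $(\tau n, (1-\tau)n)$ and set $T := RN^+_{\tau^2,G}(S)$. The essential observation is
\[
T \;\supseteq\; H := \{v \in V(G) : d^-(v) \ge n - s + \tau^2 n\},
\]
since $|N^-(v) \cap S| \ge d^-(v) - (n-s) \ge \tau^2 n$ for $v \in H$. So it suffices to show $|H| \ge s + \tau^2 n$, equivalently $d^-_{n-s-\tau^2 n + 1} \ge n-s+\tau^2 n$.

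For $s \ge n - \eta n/2$ the bound is immediate from Part~1, since $|V \setminus S| < \delta^-(G) - \tau^2 n$ and so every vertex lies in $T$. For the main range, I split on $s$ versus $n/2$. For $s \ge n/2$ (so $i := n - s - \tau^2 n < n/2$), I apply condition~(ii): its first option gives $d^-_i \ge i + \eta n \ge n - s + \tau^2 n$ (using $\eta \ge 2\tau^2$), whence $|H| \ge n - i + 1 \ge s + \tau^2 n$. For $s < n/2$ (so $i := s - \tau^2 n < n/2$), I apply condition~(i): its second option gives at least $s - \tau^2 n + \eta n + 1 \ge s + \tau^2 n$ vertices with indegree $\ge n - s + \tau^2 n$, all lying in $H$.

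The remaining subcases (second option of (ii), or first option of (i)) yield only a lower bound $d^+_j \ge j + \eta n$ valid over a range of $j$ of length about $\eta n$ near $s$. To contradict $|T| < s + \tau^2 n$ here I will feed this bound, together with $\delta^+(G) \ge \eta n$, into the double-counting identity
\[
\sum_{v \in S} d^+(v) \;=\; \sum_{u \in V} |N^-(u) \cap S| \;\le\; |T|\cdot s + (n - |T|)\tau^2 n,
\]
so that the lower bound on $\sum_{i=1}^s d^+_i$ forced by the degree conditions exceeds the upper bound implied by $|T| < s + \tau^2 n$. The main obstacle is calibrating this final counting to be tight enough across all $s$; in particular, for $s$ near $n/2$ one must combine conditions~(i) and~(ii) at several values of $i$ in overlapping ranges in order to extract sufficient slack.
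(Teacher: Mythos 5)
Your first part ($\delta^0(G)\ge\eta n$) is correct and is essentially the paper's argument, and your reduction of the ``indegree'' subcases to showing $|H|\ge s+\tau^2 n$, where $H$ is the set of vertices of indegree at least $n-s+\tau^2 n$, also matches half of the paper's case analysis. The genuine gap is in the remaining subcases, where the hypotheses only yield outdegree information, and it is not a matter of calibration: the global double-counting you propose cannot close the argument. In those subcases the best lower bound on $\sum_{v\in S}d^+(v)$ available from the hypotheses is of order $\eta n s$: the bulk of $S$ contributes only $\delta^+(G)\ge \eta n$ per vertex, and the window of indices $j$ at which $d^+_j\ge j+\eta n$ is forced (namely those $j$ for which the indegree alternative would already place $s+\tau^2n$ vertices in $T$) has length at most about $\eta n$, contributing an extra $\approx \eta n s$. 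Refuting $|T|<s+\tau^2 n$ via your inequality requires $\sum_{v\in S}d^+(v)>s^2+O(\tau^2n^2)$, which for $s$ of order $n$ (say $s=n/2$) and small $\eta$ is far larger than $\eta n s$. Moreover the indices you chose ($i=s-\tau^2n$ and $i=n-s-\tau^2n$) are too close to the boundary: they guarantee either only $\tau^2 n$ high-outdegree vertices inside $S$, or an outdegree excess of only $\tau^2 n$ over $s$, and neither survives the error term $\tau^2n^2/|X|$ in any localized edge count.

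The missing idea is to localize the count and to build in $\tau n$ (not $\tau^2 n$) of slack. Apply the condition at index $|S|-\lfloor\tau n\rfloor$. If the outdegree alternative holds there, then at least $n-|S|+\lfloor\tau n\rfloor$ vertices have outdegree at least $|S|-\lfloor\tau n\rfloor+\eta n\ge|S|+\eta n/2$, so $S$ contains a set $X$ of $\lfloor\tau n\rfloor$ such vertices. Counting only the edges leaving $X$, and letting $Y$ be the set of vertices with at least $\tau^2 n$ inneighbours in $X\subseteq S$, gives $|X|\bigl(|S|+\eta n/2\bigr)\le |Y||X|+\tau^2n^2$, hence $|Y|\ge|S|+\eta n/2-\tau^2n^2/\lfloor\tau n\rfloor\ge|S|+2\tau^2 n$; the per-vertex excess $\eta n/2$ beats the error $\approx\tau n$ precisely because $\tau\ll\eta$ and $X$ is small. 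If instead the outdegree alternative fails at that index, the conditions (using (i) when $|S|-\lfloor\tau n\rfloor<n/2$ and (ii) with $i=n-|S|+\lfloor\tau n\rfloor-\eta n$ otherwise) force at least $|S|-\lfloor\tau n\rfloor+\eta n$ vertices of indegree at least $n-|S|+\lfloor\tau n\rfloor$, and your argument via $H$ goes through. One borderline value, $|S|=n/2+\lfloor\tau n\rfloor$, must then be handled by passing to a subset of $S$ of size $|S|-1$.
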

\proof Clearly, if $d^+_1\ge 1+\eta n$ then $\delta^+(G)\ge \eta n$.
If $d^+_1< 1+\eta n$ then~(i) implies that $d^-_{n-1-\eta n}\ge n-1$.
Thus~$G$ has at least $\eta n+1$ vertices of indegree $n-1$ and so $\delta^+(G)\ge \eta n$.
It follows similarly that $\delta^-(G)\ge \eta n$.

Consider any non-empty set $S\subseteq V(G)$ with $\tau n< |S|< (1-\tau)n$
and $|S|\neq n/2+\lfloor\tau n\rfloor$. 
Let us first deal with the case when%
    \COMMENT{Have to take floors since $|S|-\tau n$ might be tiny.}
$d^+_{|S|-\lfloor \tau n\rfloor} \ge |S|-\lfloor \tau n\rfloor+\eta n\ge |S|+\eta n/2$.
Then~$S$ contains a set~$X$ of~$\lfloor \tau n\rfloor$ vertices, each having outdegree at least
$|S|+\eta n/2$. Let~$Y$ be the set of all those vertices of~$G$ that have at least
$\tau^2 n$ inneighbours in~$X$. Then $$
|X|(|S|+\eta n/2)\le |Y||X|+(n-|Y|) \tau^2 n\le |Y||X|+ \tau^2 n^2
$$
and so%
     \COMMENT{We get $|Y|\ge |S|+\eta n/2 -\frac{\tau^2 n^2}{|X|}\ge
|S|+\eta n/2 -\frac{\tau^2 n^2}{\lfloor \tau n\rfloor}$. Also, we need to have some
room for the case when $|S|= n/2+\lfloor \tau n\rfloor$, so we need the $2\tau^2 n$ instead
of $\tau^2 n$.}
$|RN^+_{\tau^2,G}(S)|\ge |Y|\ge |S|+2\tau^2 n$.

So suppose next that $d^+_{|S|-\lfloor \tau n\rfloor} < |S|-\lfloor \tau n\rfloor+\eta n$.
Since $\delta^-(G)\ge \eta n$ we may assume that $|S|\le (1-\eta+\tau^2)n<
n-1-\eta n+\lfloor \tau n\rfloor$ (otherwise $RN^+_{\tau^2,G}(S)=V(G)$ and we are done).
Thus%
    \COMMENT{ok since $n-|S|-\eta n+\lfloor \tau n\rfloor\ge 1$ by above assumption}
$$d^-_{n-|S|+ \lfloor \tau n\rfloor-\eta n}\ge n-|S|+\lfloor \tau n\rfloor\ge n-|S|+\tau^2n
$$
by~(i) and~(ii).%
    \COMMENT{(Note that this also holds if $|S|-\lfloor \tau n\rfloor>n/2$:
Take $i:=n-|S|+ \lfloor \tau n\rfloor-\eta n$. Then $i<n/2$ and
$n-i-\eta n=|S|-\lfloor \tau n\rfloor$.}
(Here we use that $|S|\neq n/2+\lfloor\tau n\rfloor$.)

So~$G$ contains at least $|S|-\lfloor \tau n\rfloor+\eta n\ge |S|+\eta n/2$ vertices~$x$
of indegree at least $n-|S|+\tau^2 n$. If $|RN^+_{\tau^2,G}(S)|< |S|+2\tau ^2 n$
then $V(G)\setminus RN^+_{\tau^2,G}(S)$ contains such a vertex~$x$.
But then~$x$ has at least~$\tau^2 n$ neighbours in~$S$, i.e.~$x\in RN^+_{\tau^2,G}(S)$, a contradiction.  

If $|S|= n/2+\lfloor \tau n\rfloor$ then considering the outneighbourhood of a subset of~$S$ of
size $|S|-1$ shows that $|RN^+_{\tau^2,G}(S)|\ge |S|-1+2\tau^2 n\ge |S|+\tau^2n$.
\endproof

The next result implies that the property of a digraph~$G$ being a robust outexpander is `inherited'
by the reduced digraph of~$G$. For this (and for Lemma~\ref{orientexp}) we need that~$G$
is a robust outexpander, rather than just an outexpander.

\begin{lemma}\label{robustR}
Let $M',n_0$ be positive integers and let $\varepsilon,d,\eta,\nu,\tau$ be positive constants such that
$1/n_0\ll  \varepsilon \ll d\ll \nu,\tau,\eta < 1$ and such that $M'\ll n_0$. Let~$G$ be a
digraph on $n\ge n_0$ vertices with $\delta ^0 (G) \ge \eta n$ and such that~$G$
is a robust $(\nu,\tau)$-outexpander. Let~$R$ be the
reduced digraph of $G$ with parameters $\varepsilon$, $d$ and~$M'$. Then $\delta^0(R)\ge \eta |R|/2$
and~$R$ is a robust $(\nu/2,2\tau)$-outexpander.
\end{lemma}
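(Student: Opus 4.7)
The plan is to transfer both properties from $G$ to $R$ by carefully accounting for the small discrepancies introduced by the Diregularity lemma. Write $k := |R|$ and recall that $km \ge (1-\varepsilon) n$, so $n/m \ge k$. The two sources of error I have to absorb are the exceptional set $V_0$ of size at most $\varepsilon n$ and the loss of at most $(d+\varepsilon) n$ edges per vertex when passing from $G$ to the pure digraph $G'$. Since $\varepsilon \ll d \ll \nu,\tau,\eta$, these losses will comfortably fit inside the slack I need.

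For $\delta^0(R) \ge \eta |R|/2$, fix $V_i \in V(R)$ and $x \in V_i$. Then $d^+_{G'}(x) \ge (\eta - d - \varepsilon) n$, and since $G'[V_i]$ is empty, every outneighbour of $x$ in $G'$ lies in $V_0$ or in some cluster $V_j$ for which $(V_i,V_j)_{G'}$ has density at least~$d$, i.e.\ for which $V_iV_j \in E(R)$. Each such $V_j$ contributes at most~$m$ vertices, so $d^+_R(V_i) \cdot m + \varepsilon n \ge (\eta - d - \varepsilon) n$ and hence $d^+_R(V_i) \ge (\eta - d - 2\varepsilon) k \ge \eta |R|/2$; the indegree bound is symmetric.

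For the robust expansion, take $\mathcal{S} \subseteq V(R)$ with $2\tau k < |\mathcal{S}| < (1 - 2\tau) k$ and let $S := \bigcup_{V_i \in \mathcal{S}} V_i$, so $|S| = |\mathcal{S}| m$; a short computation using $km \ge (1 - \varepsilon) n$ gives $\tau n < |S| < (1 - \tau) n$, so the robust $(\nu,\tau)$-outexpansion of $G$ yields $|RN^+_{\nu,G}(S)| \ge |S| + \nu n$. The key step is to show that any $x \in RN^+_{\nu,G}(S) \setminus V_0$ lies in a cluster $V_j$ belonging to $RN^+_{\nu/2,R}(\mathcal{S})$: $x$ has at least $\nu n - (d+\varepsilon) n$ inneighbours in $S$ within $G'$, and all of them must lie in clusters $V_i \in \mathcal{S}$ with $V_iV_j \in E(R)$, since every pair in $G'$ has density either $0$ or at least~$d$. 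As each such $V_i$ contributes at most~$m$ inneighbours, the number of such $V_i$ is at least $(\nu - d - \varepsilon) k \ge (\nu/2) |R|$, proving $V_j \in RN^+_{\nu/2,R}(\mathcal{S})$. Consequently $|RN^+_{\nu/2,R}(\mathcal{S})| \cdot m \ge |RN^+_{\nu,G}(S)| - |V_0| \ge |S| + (\nu - \varepsilon) n$, and dividing by $m$ (using $n/m \ge k$) gives $|RN^+_{\nu/2,R}(\mathcal{S})| \ge |\mathcal{S}| + (\nu/2) |R|$, as required. There is no real obstacle here; the argument is a routine inheritance proof, and the only thing to watch is that the combined error budget ($\varepsilon n$ for $V_0$, $(d+\varepsilon) n$ for edges lost to $G'$, and the slight gap between $km$ and $n$) is strictly less than the $\nu/2$ slack, which is precisely what the hierarchy $\varepsilon \ll d \ll \nu$ guarantees.
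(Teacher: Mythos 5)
Your proposal is correct and follows essentially the same route as the paper's proof: bound $\delta^0(R)$ via $\delta^0(G')\ge\delta^0(G)-(d+\varepsilon)n$ and the exceptional set, then for a set $\mathcal{S}\subseteq V(R)$ pass to the union $S$ of its clusters, use the robust expansion of $G$, and observe that each $x\in RN^+_{\nu,G}(S)\setminus V_0$ retains at least $(\nu-d-\varepsilon)n$ inneighbours in $S$ in the pure digraph, which spread over at least $\nu k/2$ clusters of $\mathcal{S}$ and hence place the cluster of $x$ in $RN^+_{\nu/2,R}(\mathcal{S})$. The only cosmetic difference is that the paper routes this through the intermediate set $RN^+_{\nu/2,G'}(S')$, which you fold into a single step.
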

\proof
Let~$G'$ denote the pure digraph, $k:=|R|$, let~$V_1,\dots,V_k$ be the clusters of~$G$
(i.e.~the vertices of~$R$) and~$V_0$ the exceptional set. Let $m:=|V_1|=\dots=|V_k|$.
Then $$\delta^0(R)\ge (\delta^0(G')-|V_0|)/m\ge (\delta^0(G)-(d+2\eps)n)/m\ge \eta k/2.$$

Consider any $S\subseteq V(R)$ with $2\tau k\le |S|\le (1-2\tau)k$. 
Let~$S'$ be the union of all the clusters belonging to~$S$.
Then $\tau n \le |S'|\le (1-2\tau)n$. Since $|N^-_{G'}(x)\cap S'|\ge |N^-_{G}(x)\cap S'|-(d+\eps)n
\ge \nu n/2$ for every $x\in RN^+_{\nu,G}(S')$ this implies that
$$
|RN^+_{\nu/2,G'}(S')|\ge |RN^+_{\nu,G}(S')|\ge |S'|+\nu n\ge |S|m+\nu mk.
$$
However, in~$G'$ every vertex $x\in RN^+_{\nu/2,G'}(S')\backslash V_0$ receives edges from vertices in at least
$|N^-_{G'}(x)\cap S'|/m\ge (\nu n/2)/m \ge \nu k/2$ clusters~$V_i\in S$. Thus 
by the final property of the partition in Lemma~\ref{dilemma} the cluster~$V_j$ containing~$x$
is an outneighbour of each such~$V_i$ (in~$R$). Hence $V_j\in RN^+_{\nu/2,R}(S)$. This
in turn implies that
$$|RN^+_{\nu/2,R}(S)|\ge (|RN^+_{\nu/2,G'}(S')|-|V_0|)/m\ge |S|+\nu k/2,
$$
as required.
\endproof

The strategy of the proof of Theorem~\ref{approxnw} is as follows. By Lemma~\ref{robustG}
our given digraph~$G$ is a robust outexpander and by Lemma~\ref{robustR} this also
holds for the reduced digraph~$R$ of~$G$. The next result 
gives us a spanning oriented subgraph~$R^*$ of~$R$ which is still an
outexpander. The somewhat technical property concerning the subdigraph~$H\subseteq R$
in Lemma~\ref{orientexp} will be used to guarantee an oriented subgraph~$G^*$ of $G$
which has linear minimum semidegree and such that~$R^*$ is a reduced digraph of~$G^*$.
($G^*$ will be obtained from the spanning subgraph of the pure digraph~$G'$
which corresponds to~$R^*$ by modifying the neighbourhoods of a small number of vertices.)
Finally, we will apply Lemma~\ref{cyclelemma} with~$R^*$ playing the role of both~$R$ and~$R^*$
and~$G^*$ playing the role of~$G$ to find a Hamilton cycle in~$G^*$ and thus in~$G$.

\begin{lemma}\label{orientexp}
Given positive constants $\nu\le \tau\le \eta$, there exists a positive integer~$n_0$
such that the following holds. Let~$R$ be a digraph on~$n\ge n_0$ vertices which is a
robust $(\nu,\tau)$-outexpander. Let~$H$ be a spanning subdigraph of~$R$ with
$\delta^0(H)\ge \eta n$. Then $R$ has
a spanning oriented subgraph~$R^*$ which is a robust $(\nu/12,\tau)$-outexpander and such
that $\delta^0(R^*\cap H)\ge \eta n/4$.
\end{lemma}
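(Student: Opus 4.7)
The plan is to take $R^*$ to be a uniformly random orientation of the $2$-cycles of $R$: for every unordered pair $\{u,v\}$ with $uv,vu\in E(R)$, flip a fair coin to retain exactly one of $uv,vu$ in $R^*$, independently across $2$-cycles. All single-direction edges of $R$ are kept. I aim to show that with positive probability the resulting oriented digraph $R^*$ satisfies both required properties.

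The minimum semi-degree condition is straightforward: for each vertex $v$, every out-edge at $v$ in $H$ survives in $R^*$ with probability at least $1/2$, so $\ex[d^+_{R^*\cap H}(v)] \geq \eta n/2$, and a Chernoff bound gives $d^+_{R^*\cap H}(v) \geq \eta n/4$ with probability $1-\exp(-\Omega(\eta n))$. Symmetrically for in-degrees, so a union bound over the $2n$ degree conditions yields $\delta^0(R^*\cap H) \geq \eta n/4$ with probability $1-o(1)$.

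For the robust outexpander condition, fix any $S$ with $\tau n \leq |S| \leq (1-\tau)n$. Since $R$ is a robust $(\nu,\tau)$-outexpander, $|RN^+_{\nu,R}(S)| \geq |S|+\nu n$, so it suffices to show that only a small number of vertices in $RN^+_{\nu,R}(S)$ drop out of $RN^+_{\nu/12,R^*}(S)$. For each $v\in RN^+_{\nu,R}(S)$, split its in-neighbours in $S$ (under $R$) into $B_v$ (those whose reverse edge is absent from $R$, hence surviving in $R^*$ unconditionally) and $A_v$ (the $2$-cycle in-neighbours). If $|B_v|\geq \nu n/12$ then $v\in RN^+_{\nu/12,R^*}(S)$ deterministically; otherwise $|A_v|\geq 11\nu n/12$ and $|N^-_{R^*}(v)\cap S|-|B_v|$ is $\mathrm{Bin}(|A_v|,1/2)$-distributed, so Chernoff gives $\prob[v\notin RN^+_{\nu/12,R^*}(S)]\leq \exp(-\Omega(\nu^2 n))$. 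Crucially, for $v\notin S$ the relevant $2$-cycles $\{u,v\}$ (with $u\in A_v\subseteq S$) are disjoint for distinct $v$, so these failure events are mutually independent; a Chernoff-type bound for sums of independent Bernoullis with tiny mean then shows that the number of failures among $v\in RN^+_{\nu,R}(S)\setminus S$ exceeds $\nu n/24$ only with probability $\exp(-\Omega(\nu^3 n^2))$, which comfortably absorbs the eventual $2^n$ union bound over $S$ provided $n$ is sufficiently large in terms of $\nu$.

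The main obstacle I foresee is controlling the failures at vertices $v\in RN^+_{\nu,R}(S)\cap S$, since their failure events are linked through the coin flips of $2$-cycles internal to $S$. My plan is a two-stage exposure: first reveal the coin flips for $2$-cycles touching $S$ but not lying entirely inside it, which still provides each such $v$ with an independent binomial contribution via its ``external'' $2$-cycles, handling via the previous argument those $v\in S$ with enough external $2$-cycle in-neighbours; then expose the remaining internal flips. The leftover vertices rely almost entirely on $2$-cycles inside a dense internal region, but there is substantial slack ($|RN^+_{\nu,R}(S)|-|S|-\nu n/12\geq 11\nu n/12$), which a careful accounting (splitting cases by whether $|V_2|$ is large or small relative to the slack) is enough to absorb. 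A union bound over the at most $2^n$ choices of $S$ then yields the robust $(\nu/12,\tau)$-outexpander property for $R^*$ with positive probability, and combining with the semi-degree argument produces the desired $R^*$.
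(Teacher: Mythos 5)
Your construction of $R^*$, your treatment of the semi-degree condition, and your handling of the external vertices $v\in RN^+_{\nu,R}(S)\setminus S$ all match the paper's proof, and those parts are sound. The genuine gap is exactly where you flag the ``main obstacle'': the vertices $v\in RN^+_{\nu,R}(S)\cap S$. The two-stage exposure you propose cannot work as described. For $v\in S$, every relevant $2$-cycle $\{u,v\}$ with $u\in A_v\subseteq S$ has \emph{both} endpoints in $S$, so it lies entirely inside $S$; hence the first stage (revealing the $2$-cycles touching $S$ but not contained in it) reveals nothing about $|N^-_{R^*}(v)\cap S|$ for any $v\in S$, and there are no ``external $2$-cycles'' to supply the independent binomial contribution you invoke. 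Beyond that, the sketch appeals to an undefined quantity $|V_2|$ and to ``careful accounting'', but no concentration mechanism is offered for the internal failure events, which genuinely are dependent (vertices $u,v\in S$ share the coin of the $2$-cycle $\{u,v\}$). A bounded-differences bound is hopeless here: there are up to $\binom{|S|}{2}$ coins, each affecting the failure count by at most $2$, so McDiarmid only gives a bound of order $\exp(-\Theta(\nu^2))$ for a deviation of $\nu n/24$ --- nowhere near the $2^{-n}$ needed to survive the union bound over $S$.

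The paper closes this gap with an idea your sketch is missing. It calls a set $S$ \emph{good} if all but at most $\nu n/6$ vertices of the \emph{external} robust neighbourhood $ERN^+_{\nu/3,R}(S):=RN^+_{\nu/3,R}(S)\setminus S$ (note the relaxed threshold $\nu/3$) lie in $ERN^+_{\nu/12,R^*}(S)$; it proves $\prob(S \text{ is not good})\le \eul^{-n}$ by precisely the independence argument you already have, and it union-bounds over \emph{all} $2^n$ subsets, not only those of size in the range $(\tau n,(1-\tau)n)$. Then, deterministically, for a given $S$ it partitions $S=S_1\cup S_2$ so that every $x\in N:=RN^+_{\nu,R}(S)\cap S$ has at least $\nu n/3$ in-neighbours in each $S_i$ (a random partition does this). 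Now $S_1\cap N\subseteq ERN^+_{\nu/3,R}(S_2)$ is \emph{external} to $S_2$, so goodness of $S_2$ salvages all but $\nu n/6$ of it, and symmetrically for $S_2\cap N$ via goodness of $S_1$; the total loss of $3\nu n/6$ is absorbed by the slack $\nu n$. Some such reduction of the internal vertices to an external-neighbourhood statement applied to subsets of $S$ is needed; without it your argument does not go through.
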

\proof
Consider a random spanning oriented subgraph~$R^*$ of~$R$ obtained by deleting one
of the edges $xy,yx$ (each with probability 1/2) for every pair $x,y\in V(R)$
for which $xy,yx\in E(R)$, independently from all other such pairs.
Given a vertex~$x$ of~$R$, we write $N^{\pm}_R(x)$ for the set of all those vertices of~$R$
which are both out- and inneighbours of~$x$ and define $N^{\pm}_H(x)$ similarly.
Let $H^*:=H\cap R^*$. Clearly, $d^+_{H^*}(x), d^-_{H^*}(x)\ge \eta n/4$
if $|N^{\pm}_H(x)|\le 3\eta n/4$. So suppose that $|N^{\pm}_H(x)|\ge 3\eta n/4$. Let
$X:=|N^{\pm}_H(x)\cap N^+_{H^*}(x)|$. Then $\ex X \ge 3\eta n/8$ and so a standard Chernoff
estimate (see e.g.~\cite[Cor.~A.14]{ProbMeth}) implies that
$$
\prob(d^+_{H^*}(x)<\eta n/4)\le \prob(X < \eta n/4 ) \le 
\prob(X< 2\ex X/3) <2\eul^{-c\ex X}\le 2\eul^{-3c\eta n/8},
$$
where~$c$ is an absolute constant (i.e.~it does not depend on $\nu$, $\tau$ or $\eta$).
Similarly it follows that $\prob(d^-_{H^*}(x)<\eta n/4)\le 2\eul^{-3c\eta n/8}$.

Consider any set $S\subseteq V(R^*)=V(R)$.
Let $ERN^+_{ \nu/3 , R}(S):=RN^+_{ \nu/3 , R}(S)\setminus S$ and define
$ERN^+_{\nu/12,R^*}(S)$ similarly. We say that~$S$ is \emph{good} if all
but at most~$\nu n/6$ vertices in $ERN^+_{ \nu/3 , R}(S)$ are contained
in $ERN^+_{\nu/12,R^*}(S)$. Our next aim is to show that
\begin{equation}\label{eq:good}
\prob(S \text{ is not good})\le \eul^{- n}.
\end{equation}
To prove~(\ref{eq:good}), write $ERN^{\pm}_R(S)$ for the set of all those
vertices $x\in ERN^+_{ \nu/3 , R}(S)$ for which $|N^{\pm}_R(x)\cap S|\ge \nu n/4$.
Note that every vertex in $ERN^+_{ \nu/3 , R}(S)\setminus ERN^{\pm}_{R}(S)$
will automatically lie in $ERN^+_{\nu/12,R^*}(S)$.
We say that a vertex $x\in ERN^{\pm}_R(S)$ \emph{fails}
if $x\notin ERN^+_{\nu/12,R^*}(S)$. 
The expected size of $N^-_{R^*}(x)\cap N^{\pm}_R(x)\cap S$ is at least $\nu n/8$.
So as before, a Chernoff estimate gives
$$
\prob(x \text{ fails})\le\prob(|N^-_{R^*}(x)\cap N^{\pm}_R(x)\cap S|<\nu n/12)\le 2\eul^{-c\nu n/8}=:p.
$$
Let~$Y$ be the number of all those vertices $x\in ERN^{\pm}_R(S)$ which fail.
Then $\ex Y \le p|ERN^{\pm}_R(S)| \le pn$.
Note that the failure of distinct vertices is independent (which is the reason we are only
considering vertices in the external neighbourhood of $S$).
So we can apply the following Chernoff estimate
(see e.g.~\cite[Theorem~A.12]{ProbMeth}): If $C \ge \eul^2$ we have
$$
\prob(Y\ge C\ex Y)\le \eul^{(C -C\ln C)\ex Y}\le \eul^{-C(\ln C) \ex Y/2}.
$$
Setting $C:= \nu n/(6\ex Y)\ge \nu/(6p)$ this gives
\begin{align*}
\prob (S \mbox{ is not good}) & =
\prob(Y> \nu n/6)=\prob(Y>  C\ex Y) \le \eul ^{-C (\ln C) \ex Y/2} =
\eul ^{-\nu n (\ln C) /12} \\ 
& \le \eul^{-n}.
\end{align*}
(The last inequality follows since $p \ll \nu$ if~$n$ is sufficiently large.)
This completes the proof of~(\ref{eq:good}).

Since $4n\eul^{-3c\eta n/8}+2^n \eul^{-n}<1$ (if $n$ is sufficiently large)
this implies that there is an outcome for~$R^*$ such that $\delta^0(R^*\cap H)\ge \eta n/4$
and such that every set $S\subseteq V(R)$ is good.
We will now show that the latter property implies that such an~$R^*$ is a
robust $(\nu/12,\tau)$-outexpander.
So consider any set~$S \subseteq V(R)$ with $\tau n < |S| <(1-\tau) n$. 
Let $EN:=ERN^+_{\nu,R}(S)$ and
$N:=RN^+_{\nu,R}(S)\cap S$. So $EN\cup N= RN^+_{\nu,R}(S)$. Since~$S$ is good and
$EN\subseteq ERN^+_{\nu/3,R}(S)$ all but at most $\nu n/6$ vertices in~$EN$ are contained
in $ERN^+_{\nu/12,R^*}(S)\subseteq RN^+_{\nu/12,R^*}(S)$.

Now consider any partition of~$S$ into~$S_1$ and~$S_2$ such that every vertex $x\in N$
satisfies $|N^-_R(x)\cap S_i|\ge \nu n/3$ for $i=1,2$. (The existence of such a partition
follows by considering a random partition.) Then $S_1\cap N \subseteq ERN^+_{\nu/3,R}(S_2)$.
But since~$S_2$ is good this implies that all but at most $\nu n/6$ vertices in~$S_1\cap N$
are contained in $ERN^+_{\nu/12,R^*}(S_2)\subseteq RN^+_{\nu/12,R^*}(S)$.
Similarly, since~$S_1$ is good, all but at most $\nu n/6$ vertices in~$S_2\cap N$
are contained in $ERN^+_{\nu/12,R^*}(S_1)\subseteq RN^+_{\nu/12,R^*}(S)$.
Altogether this shows that
$$|RN^+_{\nu/12,R^*}(S)|\ge |EN\cup (S_1\cap N)\cup (S_2\cap N)|-\frac{3\nu n}{6}
=|RN^+_{\nu,R}(S)|-\frac{\nu n}{2}\ge |S|+\frac{\nu n}{2},
$$
as required.
\endproof

\section{Proof of Theorem~\ref{approxnw}}\label{sec4}

As indicated in Section~\ref{sec1}, instead of proving Theorem~\ref{approxnw} directly,
we will prove the following stronger result. It immediately implies Theorem~\ref{approxnw}
since by Lemma~\ref{robustG} any digraph~$G$ as in~Theorem~\ref{approxnw} is a robust outexpander
and satisfies $\delta^0(G)\ge \eta n$.

\begin{thm}\label{expanderthm}
Let $n_0$ be a positive integer and $\nu,\tau,\eta$ be
positive constants such that $1/n_0\ll\nu\le \tau\ll\eta<1$. Let~$G$ be a digraph on~$n\ge n_0$ vertices with
$\delta^0(G)\ge \eta n$ which is a robust $(\nu,\tau)$-outexpander. Then~$G$ contains a Hamilton cycle.
\end{thm}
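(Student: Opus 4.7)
The plan is to reduce Theorem~\ref{expanderthm} to Lemma~\ref{cyclelemma}, which supplies a Hamilton cycle in an oriented graph satisfying an outexpansion condition. To do so I would first fix auxiliary constants $\varepsilon, d, M'$ with $1/n_0 \ll 1/M' \ll \varepsilon \ll d \ll \nu$ and apply the Diregularity lemma (Lemma~\ref{dilemma}) to $G$, obtaining an exceptional set $V_0$, clusters $V_1, \ldots, V_k$ of common size $m$, pure digraph $G'$, and reduced digraph $R$. By Lemma~\ref{robustR}, $R$ is a robust $(\nu/2, 2\tau)$-outexpander with $\delta^0(R) \ge \eta k/2$.

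Next I would pick a subdigraph $H \subseteq R$ tailored to the subsequent construction of $G^*$. A natural candidate is to put $V_iV_j \in H$ exactly when the density of the pair $(V_i, V_j)_{G'}$ is at least some threshold $\theta = \theta(\eta) > 0$ independent of $d$ (for example $\theta := \eta/3$); an averaging argument against $\delta^0(G) \ge \eta n$ shows $\delta^0(H) \ge c_1 k$ for some constant $c_1 > 0$. Applying Lemma~\ref{orientexp} with this $H$ then yields a spanning oriented subdigraph $R^*$ of $R$ that is a robust $(\nu/24, 2\tau)$-outexpander (hence in particular a $(\nu/24, 2\tau)$-outexpander as required by Lemma~\ref{cyclelemma}) and satisfies $\delta^0(R^* \cap H) \ge c_2 k$ for some $c_2 > 0$.

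Now I would define $G^*$ as the spanning subdigraph of $G'$ consisting of the edges from $V_i$ to $V_j$ for each $V_iV_j \in R^*$. Then $G^*$ is oriented, every surviving pair $(V_i, V_j)_{G^*}$ inherits $\varepsilon$-regularity and density at least $d$ from $G'$, and $R^*$ is a reduced digraph of $G^*$ with the same regularity parameters. Since each cluster $V_i$ has at least $c_2 k$ outneighbours in $R^* \cap H$, and each such pair contributes (by $\varepsilon$-regularity) at least $(\theta - \varepsilon)m$ outneighbours to all but $\varepsilon m$ vertices $x \in V_i$, a union bound shows that all but $O(\varepsilon n)$ vertices of $G$ satisfy $d^+_{G^*}(x) \ge \alpha n$ for some constant $\alpha = \alpha(\eta)>0$, and similarly for the indegree. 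The small remaining set of atypical vertices is absorbed into $V_0$ (or their neighbourhoods are locally modified within $G$ via a standard swap with a typical representative of their cluster), yielding $\delta^0(G^*) \ge \alpha n/2$ while preserving the fact that $R^*$ is a reduced digraph of $G^*$. Finally, Lemma~\ref{cyclelemma} applied to $G^*$ with $R^*$ playing both roles, and with hierarchy constants $\varepsilon, d, \nu/24, 2\tau, \alpha/4$, returns a Hamilton cycle of $G^*$, hence of $G$.

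The main technical obstacle is the choice of $H$ in Lemma~\ref{orientexp}. A naive choice such as $H = R$ only delivers $\delta^0(G^*) \gtrsim \eta d n$, which is fatal: the hierarchy of Lemma~\ref{cyclelemma} forces its density parameter $d$ to be chosen much smaller than the expansion parameter $2\tau$, so if $\delta^0(G^*)$ scaled with $d$ then one would need $\tau \ll \eta d \ll \tau$, a contradiction. Decoupling $\delta^0(G^*)$ from $d$ through the density-threshold choice of $H$ above is what makes the hierarchy close. A secondary but still delicate point is that the atypical vertices must be accommodated without adding edges outside $G$, so that the Hamilton cycle found in $G^*$ really does live in $G$.
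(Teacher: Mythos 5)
Your proposal is correct and follows essentially the same route as the paper: Diregularity lemma, Lemma~\ref{robustR} for the reduced digraph, the density-threshold subdigraph $H$ (the paper uses threshold $\eta/4$) fed into Lemma~\ref{orientexp}, then the cluster-wise removal of atypical vertices and reconnection of exceptional vertices using genuine edges of $G$ before invoking Lemma~\ref{cyclelemma} with $R^*$ in both roles. You also correctly identify the one real subtlety -- that $H$ must be chosen so that $\delta^0(G^*)$ scales with $\eta$ rather than with $d$ -- which is exactly why the paper introduces $H$.
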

\proof Pick a positive integer $M'$ and additional constants~$\eps,d$ such that
$1/n_0\ll 1/M'\ll \eps\ll d\ll \nu$. 
Apply the Regularity lemma (Lemma~\ref{dilemma}) with parameters~$\eps$, $d$ and~$M'$ to~$G$
to obtain clusters $V_1,\dots,V_k$, an exceptional set~$V_0$ and a pure digraph~$G'$.
Then $\delta^0(G')\ge (\eta -(d+\eps))n$ by Lemma~\ref{dilemma}.
Let~$R$ be the reduced digraph of $G$ with parameters $\eps$, $d$ and~$M'$.
Lemma~\ref{robustR} implies that $\delta^0(R)\ge \eta k/2$ and that~$R$ is a robust
$(\nu/2,2\tau)$-outexpander. 

Let~$H$ be the spanning subdigraph of~$R$ in which~$V_iV_j$ is an edge if $V_iV_j\in E(R)$
and the density $d_{G'}(V_i,V_j)$ of the oriented subgraph $(V_i,V_j)_{G'}$ of~$G'$
is at least~$\eta/4$. We will now give a
lower bound on~$\delta^+(H)$. So consider any cluster~$V_i$ and let $m:=|V_i|$.
Writing $e_{G'}(V_i,V(G)\setminus V_0)$ for the number of all edges from~$V_i$
to~$V(G)\setminus V_0$ in~$G'$, we have 
$$\sum_{V_j\in N^+_R(V_i)} d_{G'}(V_i,V_j) m^2= e_{G'}(V_i, V(G)\setminus V_0)\ge \delta^0(G')m-|V_0|m
\ge (\eta-2d)nm.
$$
It is easy to see%
     \COMMENT{Indeed, if not then $\sum_{V_j\in N^+_R(V_i)} d_{G'}(V_i,V_j)\le
1\cdot \eta k/4+(\eta/4)\cdot k= \eta k/2$, a contradiction.}
that this implies that there are at least~$\eta k/4$ outneighbours~$V_j$ of~$V_i$ in~$R$ such
that $d_{G'}(V_i,V_j)\ge \eta/4$. But each such~$V_j$ is an outneighbour of~$V_i$ in~$H$
and so $\delta^+(H)\ge \eta k/4$. It follows similarly that $\delta^-(H) \ge \eta k/4$.
We now apply Lemma~\ref{orientexp}  to find a
spanning oriented subgraph~$R^*$ of~$R$ which is a (robust) $(\nu/24, 2\tau)$-outexpander
and such that $\delta^0(R^*\cap H)\ge \eta k/16$. Let $H^*:=H\cap R^*$.

Our next aim is to modify the pure digraph~$G'$ into a spanning oriented subgraph of~$G$ having minimum
semi-degree at least $\eta^2 n/100$. Let~$G^*$ be the spanning subgraph of~$G'$
which corresponds to~$R^*$. So~$G^*$ is obtained from~$G'$ by deleting all those edges~$xy$
that join some cluster~$V_i$ to some cluster~$V_j$ with $V_iV_j\in E(R)\setminus E(R^*)$.
Note that $G^*-V_0$ is an oriented graph. However, some vertices of $G^*-V_0$ may have
small degrees. We will show that there are only a few such vertices and we will add them
to~$V_0$ in order to achieve that the out- and indegrees of all the vertices outside~$V_0$ are
large. So consider any
cluster~$V_i$. For any cluster $V_j\in N^+_{H^*}(V_i)$ at most $\eps m$ vertices
in~$V_i$ have less than $(d_{G'}(V_i,V_j)-\eps)m\ge \eta m/5$ outneighbours in~$V_j$ (in the digraph~$G'$).
Call all these vertices of~$V_i$ \emph{useless for~$V_j$}. Thus on average any vertex
of~$V_i$ is useless for at most $\eps |N^+_{H^*}(V_i)|$ clusters $V_j\in N^+_{H^*}(V_i)$.
This implies that at most $\sqrt{\eps} m$ vertices in~$V_i$ are useless for
more than $\sqrt{\eps} |N^+_{H^*}(V_i)|$ clusters $V_j\in N^+_{H^*}(V_i)$. Let~$U^+_i\subseteq V_i$
be a set of size $\sqrt{\eps} m$ which consists of all these vertices and some extra vertices
from~$V_i$ if necessary. Similarly, we can choose a set~$U^-_i\subseteq V_i \setminus U_i^+$ 
of size $\sqrt{\eps} m$ such that for every
vertex $x\in V_i\setminus U^-_i$ there are at most $\sqrt{\eps} |N^-_{H^*}(V_i)|$ clusters
$V_j\in N^-_{H^*}(V_i)$ such that~$x$ has less than $\eta m/5$ inneighbours in~$V_j$.
For each $i=1,\dots,k$ remove all
the vertices in~$U^+_i\cup U^-_i$ and add them to~$V_0$. We still denote the subclusters
obtained in this way by $V_1,\dots,V_k$ and the exceptional set by~$V_0$. Thus we now have
that $|V_ 0|\le 3\sqrt{\eps} n$. Moreover, 
$$\delta^0(G^*-V_0)\ge \frac{\eta m}{5} (1-\sqrt{\eps})\delta^0(H^*)-|V_0|
\ge\frac{\eta m}{5}\frac{\eta k}{17} -3 \sqrt{ \eps} n \ge \frac{\eta^2 n}{100}.
$$
We now modify~$G^*$ by altering the neighbours of the exceptional vertices:
For every $x\in V_0$ we select a set of $\eta n/2$ outneighbours of~$x$ in~$G$
and a set of $\eta n/2$ inneighbours such that these two sets are disjoint and add the
edges between $x$ and the selected neighbours to~$G^*$. We still denote
the oriented graph thus obtained from~$G^*$ by~$G^*$. Then $\delta^0(G^*)\ge \eta^2 n/100$.
Since the partition $V_0,V_1,\dots,V_k$ of $V(G^*)$ is as described in the Regularity lemma
(Lemma~\ref{dilemma}) with parameters $3\sqrt{\eps}$, $d-\eps$ and $M'$
(where $G^*$ plays the role of $G'$ and $G$) we can say  
that~$R^*$ is a reduced digraph of~$G^*$ with these parameters.%
\COMMENT{Need $3\sqrt{\eps}$ instead of $\eps/2$ (say) to satisfy the condition on $V_0$}
Thus we may apply Lemma~\ref{cyclelemma} with~$R^*$ playing the role of both~$R$ and~$R^*$
and~$G^*$ playing the role of~$G$ to find a Hamilton cycle in~$G^*$ and thus in~$G$.
\endproof
%%%%%%%%%%%%%%%%%%%%%%%%%%%%%%
\section{Hamilton cycles in regular tournaments}\label{torn}
In this section we prove Conjecture~\ref{thomconj} for sufficiently large 
regular tournaments. The following observation of Keevash and Sudakov~\cite{keevs} will be useful for this.

\begin{prop}\label{kands} Let $0<c<10^{-4}$ and let $G$ be an oriented graph on $n$ vertices such that $\delta ^0 (G) \geq (1/2-c)n$.
Then for any (not necessarily disjoint) $S,T \subseteq V(G)$ of size at least $(1/2-c)n$ there are at least $n^2/60$
directed edges from $S$ to $T$.
\end{prop}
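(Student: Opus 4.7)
The plan is to decompose $V(G)$ into the four disjoint parts $A := S \setminus T$, $B := S \cap T$, $C := T \setminus S$ and $D := V \setminus (S \cup T)$, observe that $e(S,T) = e(A,B) + e(A,C) + e(B,B) + e(B,C) \geq e(B,B)$, and split into two cases based on the size of the overlap $B$. The key preliminary observation I would establish is the following \emph{local density bound}: for every $U \subseteq V(G)$,
\[
2\, e_G(U, U) \;\geq\; |U|\bigl(|U| - 2cn\bigr).
\]
To prove it, sum $d^+(v) + d^-(v) \geq (1-2c)n$ over $v \in U$; the left-hand side equals $2 e(U, U) + e(U, V \setminus U) + e(V \setminus U, U)$, and since $G$ is oriented the crossing edges contribute at most $|U|(n-|U|)$ to this sum.

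\textbf{Case 1: $|B| \geq n/5$.} Applying the local density bound with $U = B$ gives $e(S, T) \geq e(B, B) \geq (n/5)(n/5 - 2cn)/2$, which for $c < 10^{-4}$ exceeds $n^2/50 > n^2/60$.

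\textbf{Case 2: $|B| < n/5$.} Then $|A| = s - |B| > (3/10 - c)n$. I would combine $\sum_{v \in S} d^+(v) \geq s(1/2-c)n$ with an upper bound on $e(S, V \setminus T)$. Since $V \setminus T = A \sqcup D$, we have $e(S, V \setminus T) = e(A,A) + e(B,A) + e(A,D) + e(B,D)$, and bounding $e(A,A) \leq \binom{|A|}{2}$ by orientedness together with the trivial bounds on the other three terms gives $e(S, V \setminus T) \leq \binom{|A|}{2} + |A||B| + |A||D| + |B||D|$. Substituting $|A| + |B| = s$ and $|A| + |D| = n - t$ this simplifies to $s(n-t) - |A|(|A|+1)/2$, so
\[
e(S, T) \;\geq\; s(1/2-c)n - s(n-t) + \tfrac{1}{2}|A|(|A|+1) \;=\; s\bigl[t - (1/2+c)n\bigr] + \tfrac{1}{2}|A|(|A|+1).
\]
Since $t \geq (1/2-c)n$ and $s \leq n$ the first summand is at least $-2cn^2$, while $|A|^2/2 > (3/10-c)^2 n^2/2 > n^2/30$; combined with $c < 10^{-4}$ this yields $e(S,T) > n^2/30 - 2cn^2 > n^2/60$.

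The main obstacle is that the two ``naive'' degree-sum bounds $e(S,T) \geq s[t - (1/2+c)n]$ and $e(S,T) \geq t[s - (1/2+c)n]$ both become negative exactly in the regime $|S|, |T| \approx (1/2-c)n$ that matters. The extra positive slack must therefore come from one of two sources: either from the ``oriented gain'' $\binom{|A|}{2}$ on edges lying inside $A = S \setminus T$ in the count of $e(S, V \setminus T)$, or, when $|A|$ is too small for this to help, from the fact that $B = S \cap T$ is then large enough that the local density bound by itself already forces $e(B, B) \geq n^2/60$.
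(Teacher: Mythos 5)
The paper does not actually prove Proposition~\ref{kands}: it is quoted as an ``observation of Keevash and Sudakov'' and used as a black box, with the reader referred to~\cite{keevs}. So there is no in-paper argument to compare against; your proposal has to stand on its own, and it does. The local density bound is correct: summing $d^+(v)+d^-(v)\ge(1-2c)n$ over $v\in U$ gives $2e(U,U)+e(U,V\setminus U)+e(V\setminus U,U)\ge|U|(1-2c)n$, and orientedness bounds the crossing contribution by $|U|(n-|U|)$, yielding $2e(U,U)\ge|U|(|U|-2cn)$. Case~1 then works since $x(x-2cn)/2$ is increasing for $x\ge cn$. In Case~2 the algebraic identity $\binom{|A|}{2}+|A||B|+|A||D|+|B||D|=s(n-t)-|A|(|A|+1)/2$ checks out, the worst case of the (possibly negative) term $s[t-(1/2+c)n]$ is correctly handled by taking $s\le n$, and $(3/10-c)^2/2>1/30$ together with $1/30-2c>1/60$ closes the argument. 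The only blemish is a wording slip in Case~1: $(n/5)(n/5-2cn)/2=n^2/50-cn^2/5$ is slightly \emph{less} than $n^2/50$, not greater; it is of course still comfortably above $n^2/60$, so nothing is affected. Your diagnosis of where the slack must come from (the oriented gain $\binom{|A|}{2}$ when the overlap is small, versus the internal density of $S\cap T$ when it is large) is exactly the right way to see why the naive degree-sum bound alone fails in the regime $|S|,|T|\approx(1/2-c)n$.
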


We now show that Theorem~\ref{expanderthm} implies Conjecture~\ref{thomconj} for sufficiently large regular tournaments. 
\begin{cor}\label{thomconjlarge} There exists an integer $n_0$ such that the following holds. Given any regular tournament $G$ on $n \geq n_0$
vertices and a set $A$ of less than $(n-1)/2$ edges of $G$, then $G-A$ contains a Hamilton cycle.
\end{cor}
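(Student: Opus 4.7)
The plan is to verify that $G - A$ satisfies the hypotheses of Theorem~\ref{expanderthm} --- a linear minimum semi-degree and the robust outexpander property --- possibly after a small modification to handle a bounded number of low-degree vertices.

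First I would show that a regular tournament $G$ is a robust $(\nu_0, \tau_0)$-outexpander for suitable absolute constants $\nu_0, \tau_0 > 0$. For any set $S$ with $|S|$ close to $n/2$, applying Proposition~\ref{kands} to $S$ and $T := V(G) \setminus RN^+_{\nu_0, G}(S)$ yields at least $n^2/60$ edges from $S$ to $T$, contradicting the defining property of $T$ (that each vertex of $T$ has fewer than $\nu_0 n$ inneighbours in $S$) provided $\nu_0 < 1/60$. For $|S|$ bounded away from $n/2$, the simple double count $\sum_u |N^-(u) \cap S| = |S|(n-1)/2$ combined with the trivial bound $|N^-(u) \cap S| \le |S|$ produces a robust out-neighbourhood of size at least $|S| + \nu_0 n$, as long as $\nu_0$ is small compared to $\tau_0(1/2 - \tau_0)$. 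Passing to $G - A$: since $|A| < n/2$, at most $2/\nu_0$ vertices can lose more than $\nu_0 n/2$ of their in-edges, so $G - A$ inherits the robust $(\nu_0/2, \tau_0)$-outexpander property once $n$ is large enough.

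For the minimum semi-degree, I would count bad vertices. If $d^+_{G-A}(v) < n/4$ then $d^+_A(v) > (n-1)/4$, and since $\sum_v d^+_A(v) = |A| < (n-1)/2$ there is at most one such \emph{out-bad} vertex. Symmetrically there is at most one \emph{in-bad} vertex, and the two cannot coincide (otherwise $|A| \ge d^+_A(v) + d^-_A(v) > (n-1)/2$). Moreover, $|A|$ is strictly less than the in- or outdegree of any vertex of $G$, so every bad vertex still has at least one incoming and one outgoing edge in $G - A$. If the bad set $B$ is empty then $\delta^0(G - A) \ge n/4$ and Theorem~\ref{expanderthm} applies directly with $\eta = 1/5$, say. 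Otherwise I would handle the bad vertices by \emph{edge contraction}: for an out-bad $v^+$ pick any $y \in N^+_{G-A}(v^+)$ and identify $v^+$ and $y$ into a single super-vertex $y^*$; do the analogous contraction for an in-bad $v^-$, merging the two contractions if the chosen edges share a vertex. In the resulting digraph $G^{**}$, the super-vertex $y^*$ inherits the full inneighbourhood of $v^+$ (of linear size since $v^+$ is only out-bad) and the full outneighbourhood of $y$ (of linear size since $y \notin B$), so $\delta^0(G^{**})$ is linear. The robust outexpander property transfers from $G - A$ to $G^{**}$ with only a minor loss in parameters, so Theorem~\ref{expanderthm} applied to $G^{**}$ produces a Hamilton cycle, which uncontracts to one in $G - A$.

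The main obstacle is establishing robust outexpansion of a regular tournament uniformly for the entire range $\tau_0 n < |S| < (1 - \tau_0)n$, since Proposition~\ref{kands} only directly handles $|S|$ near $n/2$ and the separate counting argument for $|S|$ bounded away from $n/2$ forces $\nu_0$ to be small relative to $\tau_0$; matching both regimes requires choosing the constants $\nu_0, \tau_0$ carefully so that $1/n_0 \ll \nu_0 \le \tau_0 \ll \eta$ as demanded by Theorem~\ref{expanderthm}. The bad-vertex contraction step is technically routine but requires a small case analysis when the two contracted edges share a common vertex.
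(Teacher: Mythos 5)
Your proposal follows essentially the same route as the paper: establish that the regular tournament is a robust outexpander (via Proposition~\ref{kands} for $|S|$ near $n/2$ and a degree count otherwise, noting that for $|S|\ge(1/2+\tau_0)n$ the robust outneighbourhood is trivially all of $V(G)$), observe that deleting fewer than $(n-1)/2$ edges preserves this property up to a loss in the parameters, and absorb the $O(1)$ vertices of small semi-degree by contracting an outgoing (resp.\ incoming) edge at each before applying Theorem~\ref{expanderthm}. The only cosmetic difference is your coarser badness threshold $n/4$, which can leave one out-bad and one in-bad vertex and hence two contractions, whereas the paper's threshold $\eta n$ with $\eta$ small forces at most one bad vertex; both versions work.
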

\proof
Let $0<\nu \ll \tau \ll \eta \ll 1$. It is not difficult to show that $G$ is a robust $(\nu, \tau)$-outexpander. Indeed, if 
$S \subseteq V(G)$ and $(1/2+\tau)n<|S|<
(1- \tau)n$ then $RN^+ _{\nu,G} (S)=V(G)$. If $\tau n <|S|<(1/2-\tau)n $ then it is easy to see that
$|RN^+ _{\nu,G} (S)|\geq (1-\tau)n/2 \ge |S|+ \nu n$.%
    \COMMENT{Since $S$ sends out $(n-1)|S|/2$ edges we have that
$(n-1)|S|/2 \leq |S||RN ^+ _{\nu, G} (S)|+ \nu n^2$. Thus
$|RN ^+ _{\nu, G} (S)|\geq (n-1)/2-\nu n^2/|S|\ge (n-1)/2-\nu n/\tau\geq n/2 - \tau n/2 \geq |S|+ \nu n$.}
So consider the case when $(1/2 -\tau)n \leq |S| \leq (1/2+\tau)n$. Suppose $|RN ^+ _{\nu, G} (S)|< |S|+ \nu n \leq (1/2+2\tau)n$. Then by 
Proposition~\ref{kands} there are at least $n^2/60$ directed edges from $S$ to $V(G) \backslash RN^+ _{\nu ,G} (S)$. By definition each vertex
$x \in V(G) \backslash RN^+ _{\nu ,G} (S)$ has less than $\nu n$ inneighbours in $S$, a contradiction. So
$|RN^+ _{\nu,G} (S)|\geq |S|+ \nu n$ as desired.

Since $|A|< (n-1)/2$ and $n$ is sufficiently large, $G-A$ must be a robust $(\nu/2, \tau)$-outexpander. Thus if $\delta ^0 (G-A) 
\geq \eta n$ then by Theorem~\ref{expanderthm}, $G-A$ contains a Hamilton cycle. 

If $\delta ^0 (G-A)< \eta n$ then there exists precisely one vertex $x \in V(G-A)$ such that either
$d^+ _{G-A} (x) < \eta n$ or $d^- _{G-A} (x) < \eta n$.
Without loss of generality we may assume that $d^+ _{G-A} (x) < \eta n$. Note that $d^+ _{G-A} (x)\geq 1$ and let $y \in N^+ _{G-A} (x)$. 
Let $G'$ be the digraph obtained from $G-A$ by removing $x$ and $y$ from $G-A$ and adding a new vertex $z$
so that $N^+ _{G'} (z):=N^+ _{G-A} (y)$ and 
$N^- _{G'} (z):=N^- _{G-A} (x)$. So $\delta ^0 (G') \geq \eta n -2\geq \eta n/2$ and $G'$ is a robust $(\nu/3, 2 \tau)$-outexpander. 
Thus by Theorem~\ref{expanderthm} $G'$ contains a Hamilton cycle which corresponds to one in~$G$.
\endproof

\medskip 

{\footnotesize \obeylines \parindent=0pt

Daniela K\"{u}hn, Deryk Osthus \& Andrew Treglown
School of Mathematics
University of Birmingham
Edgbaston
Birmingham
B15 2TT
UK
}

{\footnotesize \parindent=0pt

\it{E-mail addresses}:
\tt{\{kuehn,osthus,treglowa\}@maths.bham.ac.uk}}
\end{document}